\numberwithin{equation}{section}
\def\for{\hskip0.9pt|\hskip0.9pt}
\outer\def\proclaime #1. #2\par{\medbreak
\noindent{\bf#1.\enspace}{\rm#2}\par
  \ifdim\lastskip<\medskipamount \removelastskip\penalty55\medskip\fi}
\def\bx{\bar x}  \def\bw{\bar w}
  \def\by{\bar y}  
\def\dom{\mathop{\rm dom}\nolimits} 
\def\gph{\mathop{\rm gph}\nolimits}
\def\argmin{\mathop{\rm argmin}}
\def\oball{{I\kern -.35em B}^{\circ}}
\def\reg F(\bx\for 0){\mathop{\rm reg}\nolimits}
\def\epsilon{\varepsilon}             \def\phi{\varphi}
\def\bx{\bar x} \def\by{\bar y} 
\def\tto{\rightrightarrows}
\def\dom{\mathop{\rm dom}\nolimits}
 \def\eps{\varepsilon}           \def\phi{\varphi}
\def\ball{{I\kern -.35em B}}
\newcommand {\del} {\partial}
\def\R{{\mathbb R}}
\newcommand{\Rex}{\R \cup \{\infty\}}
\newcommand{\fran}{%
  \advance\nFran 1
  \ifodd \nFran
    \color{blue}
  \else
    \color{black}
  \fi}
\newcommand{\michel}{%
  \advance\nMichel 1
  \ifodd \nMichel
    \color{red}
  \else
    \color{black}
  \fi}
\newtheorem{definition}{Definition}[section]
\newtheorem{theorem}[definition]{Theorem}
\newtheorem{proposition}[definition]{Proposition}
\newtheorem{corollary}[definition]{Corollary}
\newtheorem{rem}[definition]{Remark}
\begin{document}

\title{Metric subregularity of the convex subdifferential in Banach spaces}

\author{Francisco J. Arag\'{o}n Artacho\thanks{Centre for Computer Assisted Research Mathematics and its Applications (CARMA), University of Newcastle, Callaghan, NSW 2308, Australia. E-mail:
\texttt{francisco.aragon@ua.es}. Partially supported by various Australian Research Council grants.},\;
Michel H.
Geoffroy\thanks{LAMIA, Dpt. de Math\'ematiques,
Universit\'e des Antilles et de la Guyane, F-97159 Pointe-\`a-Pitre,
Guadeloupe, {\tt michel.geoffroy@univ-ag.fr}. This author is
supported by Contract EA4540 (France).}}

\maketitle
\begin{center}
\noindent\emph{This paper is dedicated to Professor Simeon Reich on the occasion of his 65th birthday}
\end{center}

\smallskip

\begin{abstract}
In~\cite{AGE08} we characterized in terms of a quadratic growth condition various metric regularity properties of the subdifferential of a lower semicontinuous convex function acting in a Hilbert space. Motivated by some recent results in~\cite{MN12} where the authors extend to Banach spaces the characterization of the strong regularity, we extend as well the characterizations for the metric subregularity and the strong subregularity given in~\cite{AGE08} to Banach spaces. We also notice that at least one implication in these characterizations remains valid for the limiting subdifferential without assuming convexity of the function in Asplund spaces. Additionally, we show some direct implications of the characterizations for the convergence of the proximal point algorithm, and we provide some characterizations of the metric subregularity and calmness properties of solution maps to parametric generalized equations
\end{abstract}

\noindent {\bf Keywords:} Subdifferential, metric regularity, metric subregularity, strong subregularity, quadratic growth\\

\noindent {\bf AMS 2000 Subject Classification:} 49J52, 49J53.
%90C48.

~\bigbreak

\section{Introduction and preliminaries}\label{secint}
In~\cite{AGE08} we established a characterization of various metric regularity concepts for the
subdifferential of a proper lower semicontinuous convex function acting in a Hilbert space. More precisely, we considered
 the metric subregularity, the strong
metric subregularity, and the (strong) metric regularity of such an operator and we showed
that each of these properties is equivalent to a local quadratic growth condition on the function.

Recently, Drusvyatskiy and Lewis~\cite{DL13} have proved that the characterization for the strong metric regularity remains valid for the limiting subdifferential at $\bx$ for $0$ (where $\bx$ is a local minimimizer) of a not necessarily convex function in $\R^n$ when the function is subdifferentially continuous at $\bx$ for $0$ (see~\cite[Th.~3.3]{DL13}). Subsequently, Mordukhovich and Nghia~\cite{MN12} have generalized this latter result to Asplund spaces without the assumption of subdifferential continuity (see~\cite[Cor.~3.3]{MN12}), and have also extended our characterization of the strong regularity of the subdifferential of a convex function from Hilbert spaces to Banach spaces (see~\cite[Cor.~3.9]{AGE08} and~\cite[Th.~3.1]{MN12}). Motivated by this, we prove that the characterizations given in~\cite{AGE08} for the metric subregularity and the strong subregularity of the subdifferential of a convex function remain valid in Banach spaces. We also notice, see Remark~\ref{rem:1}\ref{rem:1iii}, that at least one implication in these characterizations remains valid for the (Mordukhovich) limiting subdifferential without assuming convexity of the function in Asplund spaces.

Throughout $X$ and $Y$ are real Banach spaces. We write $X^*$ for the real dual space of continuous linear
functionals. We denote a set-valued mapping from $X$ into the subsets of $Y$ by $F:X\tto Y$. The graph of $F$ is the set $\gph F=\{(x,y)\in X\times Y\mid y\in F(x)\}$, while $F^{-1}$ is the inverse mapping of $F$ defined by $x\in F^{-1}(y)\iff y\in F(x)$. Single-valued mappings, also called functions, are represented by $f:X\to Y$.

A function $f:X\to \R\cup\{\infty\}$ is said to be convex if
$$f\big((1-\lambda) x+\lambda y\big)\leq (1-\lambda)f(x)+\lambda f(y),$$
 for all $\lambda\in(0,1)$ and $x,y\in\dom f=\{z\in X\mid f(z)<\infty\}$.
The (convex) subdifferential of a (not necessarily convex) function $f:X\to\Rex$ is the set-valued mapping $\partial f:X\tto X^*$ which is defined at any point $\bx\in\dom f$ by
$$\partial f(\bx):=\{y^*\in X^*\mid f(x)\geq f(\bx)+\langle y^*,x-\bx\rangle\quad \text{for all } x\in X\}.$$
Therefore, $\by^*\in\partial f(\bx)$ if and only if $\bx$ is a global minimizer of the tilted function $f(\cdot)-\langle \by^*,\cdot\rangle$.
A function is said to be proper if its domain is nonempty. We will denote by $\Gamma(X)$
the space of all proper lower semicontinuous convex functions from $X$ into $\Rex$.

The closed unit ball is denoted by $\ball$, while $\ball_a(x)$ stands for the closed ball of radius $a$ centered at $x$. We denote by $d(x,C)$ the distance from a point $x$ to a set $C$, i.e., $d(x,C)=\inf_{y\in C}\|x-y\|$. If $C$ is empty, we adopt the convention $d(x,C)=\infty$ for any $x\in X$. For sets $C$ and $D$ in $X$, the \emph{excess} of $C$ beyond $D$ is defined by $e(C,D)=\sup_{x\in C}d(x,D)$, with the convention $e(\emptyset,D)=0$ when $D\neq\emptyset$, and $e(\emptyset,\emptyset)=\infty$.

Our study is focused on two key notions: \emph{metric subregularity} and \emph{strong metric subregularity}. They are defined as follows.

\begin{definition}
A mapping $F:X\tto Y$ is said to be \emph{metrically subregular}
at $\bx$ for $\by$ if  $\by\in F(\bx)$ and there is a positive constant $\kappa$ along with neighborhoods $U$ of $\bx$ and $V$
of $\by$ such such that
\begin{equation} \label{subreg2}
d(x,F^{-1}(\by))\leq\kappa d(\by,F(x)\cap V), \text{ for all } x \in U.
\end{equation}
\end{definition}

\begin{definition}
A mapping $F:X\tto Y$ is said to be \emph{strongly (metrically) subregular}
at $\bx$ for $\by$ if  $\by\in F(\bx)$ and there is a positive constant $\kappa$ along with neighborhoods $U$ of $\bx$ and $V$
of $\by$ such that
\begin{equation} \label{strongsubreg2}
\|x-\bx\|\leq\kappa d(\by,F(x)\cap V), \text{ for all } x \in U.
\end{equation}
\end{definition}
Equivalently, $F$ is strongly metrically subregular at $\bx$ for $\by$ if it is metrically subregular
at $\bx$ for $\by$ and, in addition, $\bx$ is an isolated point of $F^{-1}(\by)$.
The definition of metric subregularity can be simplified in the following way:
\begin{equation} \label{subreg}
d(x,F^{-1}(\by))\leq\kappa d(\by,F(x)), \text{ for all } x \in U',
\end{equation}
for a possibly smaller neighborhood $U'$ of $\bx$, see~\cite[pp.~371--372]{AGE08} for details.
Likewise, the definition of strong subregularity can be simplified as
\begin{equation} \label{strongsubreg}
\|x-\bx\|\leq\kappa d(\by,F(x)), \text{ for all } x \in U',
\end{equation}
for a possibly smaller neighborhood $U'$ of $\bx$.

Relation~\eqref{strongsubreg} implies in particular that $\bx$ is an isolated point of $F^{-1}(\by)$.
If a linear mapping $A$ is strongly metrically subregular at $\bx$ for $\by=A\bx$ then  $A^{-1}\by=\{\bx\}$  and $A$ is injective; in finite dimensional spaces it is an equivalence.
More generally, a polyhedral (set-valued) mapping $F$, i.e., a mapping whose graph is the union of finitely many polyhedral convex sets, is
metrically subregular with the same constant at any point of its graph, and it is strongly subregular at $\bx$ for $\by$ if and only if $\bx$ is an isolated point of $F^{-1}(\by)$. This is a consequence of~\cite[Prop.~1]{R81}, see also~\cite{DRO09} for more information about these properties. %It turns out that strong metric subregularity {\em blah blah blah ( involved in : inverse function theorem, implicit, etc.}\\

Our main tool will be the well-known Ekeland's variational principle, see~\cite[Th.~1.1]{E74}.

\begin{theorem}[Ekeland's variational principle]\label{th:Ekeland}
Let $(X,d)$ be a complete metric space, and $f:X\to\Rex$ a proper lower semicontinuous function bounded from below. Suppose that for some $u\in X$ and some $\eps>0$, $f(u)\leq \inf_{x\in X} f(x) +\eps$. Then for every $\lambda>0$ there exists some point $v\in X$ such that
\begin{gather}
d(u,v)\leq \lambda,\\
f(v)+(\eps/\lambda)d(u,v)\leq f(u),\\
f(x)>f(v)-(\eps/\lambda)d(v,x),\quad\forall x\neq v.
\end{gather}
\end{theorem}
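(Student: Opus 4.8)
The plan is to follow Ekeland's original argument, based on iterating a well-chosen ``descent'' relation. Set $\delta=\eps/\lambda>0$ and define on $X$ the relation
\[
y\preceq x\iff f(y)+\delta\,d(x,y)\le f(x).
\]
First I would check that $\preceq$ is a partial order: reflexivity is immediate; antisymmetry follows by adding the two defining inequalities for $y\preceq x$ and $x\preceq y$, which forces $2\delta\,d(x,y)\le0$; and transitivity follows from the triangle inequality, since $z\preceq y\preceq x$ gives $f(z)+\delta d(y,z)+\delta d(x,y)\le f(x)$ together with $d(x,z)\le d(x,y)+d(y,z)$. For any $x\in\dom f$ the lower section $S(x):=\{y\in X\mid y\preceq x\}$ is nonempty (it contains $x$) and closed, because $y\mapsto f(y)+\delta\,d(x,y)$ is lower semicontinuous.

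Next I would build a $\preceq$-decreasing sequence. Put $u_0=u$ (note $u\in\dom f$ since $f(u)\le\inf_X f+\eps<\infty$), and inductively choose $u_{n+1}\in S(u_n)$ with $f(u_{n+1})\le\inf_{S(u_n)}f+2^{-n}$; this is possible because $f$ is bounded below on $X$, hence on $S(u_n)$. Then $u_m\preceq u_n$ whenever $m\ge n$ (chain plus transitivity), so $\delta\,d(u_n,u_m)\le f(u_n)-f(u_m)$. The sequence $(f(u_n))_n$ is nonincreasing and bounded below, hence convergent, so this estimate shows $(u_n)_n$ is Cauchy; by completeness it converges to some $v\in X$. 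Since each $S(u_n)$ is closed and contains $u_m$ for all $m\ge n$, we get $v\in\bigcap_n S(u_n)$; in particular $v\preceq u_0=u$, i.e.\ $f(v)+\delta\,d(u,v)\le f(u)$, which is the second displayed inequality, and combining it with $f(u)\le\inf_X f+\eps\le f(v)+\eps$ yields $\delta\,d(u,v)\le\eps$, that is, $d(u,v)\le\lambda$.

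It remains to establish the minimality of $v$, which is the one step requiring care. Suppose some $x\ne v$ satisfies $f(x)\le f(v)-\delta\,d(v,x)$, i.e.\ $x\preceq v$. Since $v\preceq u_{n+1}$ for every $n$, transitivity gives $x\preceq u_{n+1}\preceq u_n$, so $x\in S(u_n)$ and hence $f(x)\ge\inf_{S(u_n)}f\ge f(u_{n+1})-2^{-n}$; on the other hand $x\preceq u_{n+1}$ gives $\delta\,d(u_{n+1},x)\le f(u_{n+1})-f(x)\le 2^{-n}$, whence $d(u_{n+1},x)\to0$ and $x=\lim_n u_{n+1}=v$, a contradiction. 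Thus $f(x)>f(v)-(\eps/\lambda)d(v,x)$ for all $x\ne v$, which is the third inequality. I expect the main obstacle to be precisely this last step: producing a $\preceq$-minimal element abstractly (say via Zorn's lemma) is not enough, one must control the diameters of the nested sections $S(u_n)$ so that their intersection collapses to the single point $v$, and this is exactly what the quantitative choice $f(u_{n+1})\le\inf_{S(u_n)}f+2^{-n}$ provides.
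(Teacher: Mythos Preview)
Your argument is correct and is essentially the standard iterative proof of Ekeland's principle (build a $\preceq$-decreasing Cauchy sequence by nearly minimizing $f$ on the successive lower sections, and use the quantitative choice $f(u_{n+1})\le\inf_{S(u_n)}f+2^{-n}$ to force the intersection of the sections to be the singleton $\{v\}$). Each step checks out, including the last one, which you rightly flag as the delicate part.

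However, there is nothing to compare with: the paper does \emph{not} prove this statement. Theorem~\ref{th:Ekeland} is quoted as a tool, with a reference to Ekeland's original paper~\cite{E74}, and is then applied in the proof of Theorem~\ref{th_subreg}. So your proposal supplies a proof where the paper simply cites one; it is a faithful version of the classical argument and would be perfectly acceptable as a self-contained justification.
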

\smallskip

The rest of the paper is organized as follows. In Section~2 we prove that the characterization of the metric subregularity given in~\cite{AGE08} for Hilbert spaces remains valid in Banach spaces. In Section~3 we extend as well the characterization of the strong subregularity to Banach spaces, and show some additional characterizations of this property. The last Section~4 contains some direct consequences of the main results proved in the paper regarding the convergence of the proximal point algorithm, and we show some characterizations of the metric subregularity and calmness properties of solution maps to parametric generalized equations. These consequences are actually our main motivation for studying the metric regularity properties of the subdifferential, and the reason why we believe in the importance in characterizing these properties.

\section{Characterization of metric subregularity}

We begin by showing a characterization of the metric subregularity of the subdifferential of a proper lower semicontinuous function. This result, originally proved in~\cite{AGE08} in Hilbert spaces, remains valid in Banach spaces with some adjustments in the proof, which has also been simplified.

\begin{theorem}[{\cite[Theorem~3.3]{AGE08}}]\label{th_subreg}
  Given a Banach space $X$, consider a function $f\in\Gamma(X)$
  and points $\bx\in X$ and $\by^*\in X^*$ such that $\by^*\in\partial f(\bx)$.
  Then $\partial f$ is metrically subregular at $\bx$ for
  $\by^*$ if and only if there exist a neighborhood $U$ of $\bx$ and a
  positive constant $c$ such that
  \begin{equation}\label{charac_subreg}
    f(x)\geq f(\bx)+\langle \by^*,x-\bx\rangle +cd^2(x,(\partial f)^{-1}(\by^*)) \text{ whenever } x\in U.
  \end{equation}
  Specifically, if $\partial f$ is metrically subregular at $\bx$ for $\by^*$ with constant $\kappa$, then~\eqref{charac_subreg} holds for all  $c<1/(4\kappa)$; conversely,
  if~\eqref{charac_subreg} holds with constant $c$, then $\partial f$ is metrically subregular at $\bx$ for $\by^*$ with constant $1/c$.
\end{theorem}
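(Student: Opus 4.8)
The plan is to prove the two implications separately, using Ekeland's variational principle as the main engine in the forward direction and a direct argument (via the subgradient inequality) in the converse direction. Both implications rely on the fact, recorded after the definitions, that metric subregularity can be tested with the simplified inequality $d(x,(\partial f)^{-1}(\by^*))\le\kappa\,d(\by^*,\partial f(x))$ on a (possibly smaller) neighborhood of $\bx$, so I may freely drop the localization set $V$.

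For the \emph{converse} implication, suppose \eqref{charac_subreg} holds on $U$ with constant $c$. Fix $x\in U$ and take any $x^*\in\partial f(x)$ (if $\partial f(x)=\emptyset$ there is nothing to prove). Let $\bx$-projection-type point: pick (or approximate) $\bar z\in(\partial f)^{-1}(\by^*)$. Since $\by^*\in\partial f(\bar z)$, the point $\bar z$ is a global minimizer of $f(\cdot)-\langle\by^*,\cdot\rangle$, so $f(\bar z)-\langle\by^*,\bar z\rangle\le f(x)-\langle\by^*,x\rangle$; combining this with the subgradient inequality $f(\bar z)\ge f(x)+\langle x^*,\bar z-x\rangle$ and with \eqref{charac_subreg} applied at $x$ yields, after cancelling $f(x)$, an estimate of the form $c\,d^2(x,(\partial f)^{-1}(\by^*))\le \langle x^*-\by^*,\bar z-x\rangle\le \|x^*-\by^*\|\,\|x-\bar z\|$. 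Taking the infimum over $\bar z\in(\partial f)^{-1}(\by^*)$ gives $c\,d^2(x,(\partial f)^{-1}(\by^*))\le \|x^*-\by^*\|\,d(x,(\partial f)^{-1}(\by^*))$, hence $d(x,(\partial f)^{-1}(\by^*))\le (1/c)\,\|x^*-\by^*\|$; taking the infimum over $x^*\in\partial f(x)$ produces $d(x,(\partial f)^{-1}(\by^*))\le (1/c)\,d(\by^*,\partial f(x))$, which is metric subregularity at $\bx$ for $\by^*$ with constant $1/c$.

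For the \emph{forward} implication, assume $\partial f$ is metrically subregular at $\bx$ for $\by^*$ with constant $\kappa$, and fix any $c<1/(4\kappa)$. Replacing $f$ by the tilted function $f(\cdot)-\langle\by^*,\cdot-\bx\rangle$ and subtracting $f(\bx)$, we may assume $\by^*=0$, $f(\bx)=0$, $\bx$ is a global minimizer of $f$, so $f\ge 0$; then \eqref{charac_subreg} becomes $f(x)\ge c\,d^2(x,(\partial f)^{-1}(0))$. Argue by contradiction: if this fails, there is a sequence $x_k\to\bx$ with $f(x_k)<c\,d^2(x_k,S)$, where $S=(\partial f)^{-1}(0)=\argmin f$. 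Set $r_k:=d(x_k,S)>0$ (note $x_k\notin S$ since $f(x_k)<c\,r_k^2$ would force $f(x_k)=0$ otherwise, contradicting $r_k=0$) and apply Ekeland's variational principle to $f$ on $X$ with the choices $\eps_k:=f(x_k)$ (so $f(x_k)\le\inf f+\eps_k$ since $\inf f=0$) and $\lambda_k:=r_k$. This yields $v_k$ with $\|x_k-v_k\|\le r_k$, $f(v_k)\le f(x_k)$, and $f(x)>f(v_k)-(\eps_k/\lambda_k)\|v_k-x\|$ for all $x\ne v_k$; the last inequality says $v_k$ minimizes $f(\cdot)+(\eps_k/\lambda_k)\|\cdot-v_k\|$, hence $0\in\partial f(v_k)+(\eps_k/\lambda_k)\ball$, i.e.\ there is $v_k^*\in\partial f(v_k)$ with $\|v_k^*\|\le \eps_k/\lambda_k = f(x_k)/r_k < c\,r_k$. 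Also $\|v_k-\bx\|\le\|v_k-x_k\|+\|x_k-\bx\|\to 0$, so for large $k$ the point $v_k$ lies in the neighborhood where metric subregularity holds, giving $d(v_k,S)\le\kappa\|v_k^*\|<\kappa c\,r_k$. On the other hand $d(v_k,S)\ge d(x_k,S)-\|x_k-v_k\|\ge r_k-r_k=0$ is too weak; instead use $r_k=d(x_k,S)\le d(v_k,S)+\|x_k-v_k\|\le \kappa c\,r_k + r_k$, which is vacuous, so the estimate $\|x_k-v_k\|\le r_k$ must be sharpened: take instead $\lambda_k:=r_k/2$ in Ekeland, so $\|x_k-v_k\|\le r_k/2$ and $\|v_k^*\|\le 2f(x_k)/r_k<2c\,r_k$, whence $r_k\le d(v_k,S)+r_k/2\le 2\kappa c\,r_k + r_k/2$, i.e.\ $1\le 2\kappa c + 1/2$, i.e.\ $c\ge 1/(4\kappa)$, contradicting $c<1/(4\kappa)$. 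This proves \eqref{charac_subreg} holds on a neighborhood of $\bx$ for every $c<1/(4\kappa)$.

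The main obstacle is the forward direction: one must produce, from a hypothetical failure point $x_k$, a nearby \emph{approximate critical point} $v_k$ of $f$ whose subgradient is small \emph{relative to its distance to the solution set}, and balance the three Ekeland parameters (the distance bound $\|x_k-v_k\|$, the descent $f(v_k)\le f(x_k)$, and the subgradient size $\eps_k/\lambda_k$) so that the constant coming out is exactly the claimed $1/(4\kappa)$. Choosing $\lambda_k$ proportional to $d(x_k,S)$ with the right proportionality constant, and using the stationarity characterization of the Ekeland point to extract $v_k^*\in\partial f(v_k)$ with a controlled norm (valid in any Banach space, since $\partial\|\cdot\|\subseteq\ball$), is exactly what the Banach-space setting requires in place of the Hilbert-space projection argument of~\cite{AGE08}.
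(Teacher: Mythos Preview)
Your proposal is correct and follows essentially the same approach as the paper: Ekeland's principle with $\lambda=\tfrac{1}{2}d(x,S)$ for the forward direction (the paper uses a single contradiction point $z$ and an explicit neighborhood $\ball_{2a/3}(\bx)$ rather than a sequence, and works with the tilted function directly rather than normalizing to $\by^*=0$, but the mechanics and the resulting constant $1/(4\kappa)$ are identical), and the subgradient inequality for the converse. One minor slip: in your converse argument the quantity bounded below by $c\,d^2$ is $\langle x^*-\by^*,\,x-\bar z\rangle$, not $\langle x^*-\by^*,\,\bar z-x\rangle$; the subsequent Cauchy--Schwarz bound is of course unaffected.
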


\begin{proof}
  Assume first that~\eqref{charac_subreg} holds. Fix $x\in U$ and consider any $y^*\in\partial f(x)$ (if $\partial f(x)=\emptyset$ there is nothing to prove). Choose any $\eps>0$. Since $(\partial f)^{-1}(\by^*)\neq\emptyset$, there is some $x_\eps\in(\partial f)^{-1}(\by^*)$ such that $\|x-x_\eps\|\leq d(x,(\partial f)^{-1}(\by^*))+\eps$. Then, by definition of the subdifferential,
  \begin{gather*}
  \langle y^*,x-x_\eps\rangle \geq f(x)-f(x_\eps),\\
  -\langle \by^*,\bx-x_\eps\rangle \geq f(x_\eps)-f(\bx).
  \end{gather*}
  Moreover, by~\eqref{charac_subreg},
  $$-\langle \bar y^*,x-\bx\rangle \geq f(\bx)-f(x)+cd^2(x,(\partial f)^{-1}(\by^*));$$
  whence,
  \begin{align*}
  \|y^*-\by^*\|(d(x,(\partial f)^{-1}(\by^*))+\eps)&\geq \|y^*-\by^*\|\|x-x_\eps\|\\
  &\geq\langle y^*-\by^*,x-x_\eps\rangle\\
  &=\langle y^*,x-x_\eps\rangle-\langle\by^*,x-\bx\rangle-\langle\by^*,\bx-x_\eps\rangle\\
  &\geq cd^2(x,(\partial f)^{-1}(\by^*)).
  \end{align*}
  Thus, taking limits when $\eps$ goes to zero,
  $$cd^2(x,(\partial f)^{-1}(\by^*))\leq \|y^*-\by^*\|d(x,(\partial f)^{-1}(\by^*)).$$
  If $d(x,(\partial f)^{-1}(\by^*))=0$ then $x\in (\partial f)^{-1}(\by^*)$, since the set $(\partial f)^{-1}(\by^*)$ is closed, whence $d(\by^*,\partial f(x))=0$ and we are done. Otherwise,
  $$d(x,(\partial f)^{-1}(\by^*))\leq \frac{1}{c}\|y^*-\by^*\|,$$
  and since $y^*\in\partial f(x)$ was arbitrarily chosen, we obtain
  $$d(x,(\partial f)^{-1}(\by^*))\leq\frac{1}{c}\, d(\by^*,\partial f(x)),\quad\text{for all }x\in U;$$
  that is, $\partial f$ is metrically subregular at $\bx$  for $\by^*$ with  constant $1/c$.

  Conversely, if $\partial f$ is metrically subregular at $\bx$ for $\by^*$ with constant $\kappa$, there is some positive constant $a$ such that
  \begin{equation}\label{eq:strong_subreg}
  d(x,(\partial f)^{-1}(\by^*))\leq\kappa d(\by^*,\partial f(x))\quad\text{for all } x\in \ball_a(\bx).
  \end{equation}
  We will prove by contradiction that~\eqref{charac_subreg} holds for all $c<1/(4\kappa)$ and $U\subset\ball_{2a/3}(\bx)$.
  Otherwise, there is some $z\in\ball_{2a/3}(\bx)$  such that
  \begin{equation}\label{eq:contradic}
  f(z)+\langle\by^*,\bx-z\rangle<f(\bx)+cd^2(z,(\partial f)^{-1}(\by^*)).
  \end{equation}
  Observe that $\bx$ is a global minimizer of the lower semicontinuous convex function $f(\cdot)+\langle\by^*,\bx-\,\cdot\,\rangle$ since $\by^*\in\partial f(\bx)$. Additionally,   \eqref{eq:contradic} and $\by^*\in\partial f(\bx)$ implies $d(z,(\partial f)^{-1}(\by^*))>0$.  By Ekeland's variational principle of Theorem~\ref{th:Ekeland}, there exists some $u\in X$ such that $\|u-z\|\leq\frac{1}{2}d(z,(\partial f)^{-1}(\by^*))$ and for all $x\in X$,
  \begin{align*}
f(x)+\langle\by^*,\bx-x\rangle &\geq f(u)+\langle\by^*,\bx-u\rangle -\frac{cd^2(z,(\partial f)^{-1}(\by^*))}{\frac{1}{2}d(z,(\partial f)^{-1}(\by^*))}\|x-u\|\\
  &=f(u)+\langle\by^*,\bx-u\rangle -2cd(z,(\partial f)^{-1}(\by^*))\|x-u\|.
  \end{align*}
  Hence, $u$ minimizes the convex function $f(\cdot)+\langle\by^*,\bx-\,\cdot\,\rangle+2cd(z,(\partial f)^{-1}(\by^*))\|\,\cdot\,-u\|$; whence,
  \begin{align}
  0&\in \partial \big(f(\cdot)+\langle\by^*,\bx-\,\cdot\,\rangle+2cd(z,(\partial f)^{-1}(\by^*))\|\,\cdot\,-u\|\big)(u)\nonumber\\
  &=\partial f(u)-\by^*+2cd(z,(\partial f)^{-1}(\by^*))\ball\label{eq:sum_rule},
  \end{align}
  where we have used in the equality the subdifferential sum rule (see, e.g.,~\cite[Th.~4.1.19]{BV_CF}).
  Therefore, there is some $y^*\in\partial f(u)$ such that $\|y^*-\by^*\|\leq 2cd(z,(\partial f)^{-1}(\by^*))$. Additionally, since
  $$d(z,(\partial f)^{-1}(\by^*))\leq\|z-u\|+d(u,(\partial f)^{-1}(\by^*))\leq \frac{1}{2}d(z,(\partial f)^{-1}(\by^*))+d(u,(\partial f)^{-1}(\by^*)),$$
  one has $0<d(z,(\partial f)^{-1}(\by^*))\leq 2d(u,(\partial f)^{-1}(\by^*))$, and thus,
  $$d(\by^*,\partial f(u))\leq\|y^*-\by^*\|\leq 4cd(u,(\partial f)^{-1}(\by^*))<\frac{1}{\kappa}d(u,(\partial f)^{-1}(\by^*)).$$
  This strict inequality contradicts~\eqref{eq:strong_subreg}, since
  $$\|u-\bx\|\leq \|u-z\|+\|z-\bx\|\leq\frac{3}{2}\|z-\bx\|\leq a,$$
  which completes the proof.
\end{proof}

\begin{rem}~\label{rem:1}
\begin{enumerate}
\item We believe that it may be possible to improve the bound $c<1/(4\kappa)$ in Theorem~\ref{th_subreg}. Nevertheless, observe that this bound seems to be rather tight, since~\eqref{charac_subreg} might be false for $c=1/\kappa$, as it happens for the real function $f(x)=x^2$.
\item Notice that the first part of the proof holds without convexity:~\eqref{charac_subreg} implies metric subregularity of the (convex) subdifferential.
\item \label{rem:1iii}The second part of the proof remains valid in Asplund spaces without assuming convexity of $f$ for the (Mordukhovich) limiting subdifferential if in addition $\bx$ is a local minimizer of the function $f(\cdot)+\langle\by^*,\bx-\cdot\,\rangle$. In this case the limiting subdifferential sum rule (see~\cite[Cor.~4.3]{MS96} or~\cite[Th.~3.6]{M06}) gives us an inclusion in~\eqref{eq:sum_rule}.
\end{enumerate}
\end{rem}

\section{Characterization of strong metric subregularity}

The following characterization for the strong subregularity of the subdifferential in Banach spaces can be easily derived as a consequence of the one for the metric subregularity in Theorem~\ref{th_subreg}.

\begin{theorem}[{\cite[Theorem~3.5]{AGE08}}]\label{th_strong_subreg}
  Given a Banach space $X$, consider a function $f\in\Gamma(X)$
  and points $\bx\in X$ and $\by^*\in X^*$ such that $\by^*\in\partial f(\bx)$.
  Then $\partial f$ is strongly subregular at $\bx$ for
  $\by^*$ if and only if there exist a neighborhood $U$ of $\bx$ and a
  positive constant $c$ such that
  \begin{equation}\label{charac_strong_subreg}
    f(x)\geq f(\bx)+\langle \by^*,x-\bx\rangle +c\|x-\bx\|^2 \text{ whenever } x\in U.
  \end{equation}
  Specifically, if $\partial f$ is strongly subregular at $\bx$ for $\by^*$ with constant $\kappa$, then~\eqref{charac_strong_subreg} holds for all  $c<1/(4\kappa)$; conversely,
  if~\eqref{charac_strong_subreg} holds with constant $c$, then $\partial f$ is strongly subregular at $\bx$ for $\by^*$ with constant $1/c$.
\end{theorem}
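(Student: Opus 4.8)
The plan is to derive both implications directly from Theorem~\ref{th_subreg}, the only extra ingredient being the observation that under either hypothesis the solution set $(\partial f)^{-1}(\by^*)$ collapses to the single point $\bx$. I would first record the structural fact: since $\by^*\in\partial f(x)$ exactly when $x$ minimizes the convex function $f(\cdot)-\langle\by^*,\cdot\rangle$, the set $(\partial f)^{-1}(\by^*)$ is closed and \emph{convex}; and a convex set having an isolated point must be a singleton (any second point would drag in a whole segment accumulating at the isolated one). So it is enough, in each direction, to check that $\bx$ is isolated in $(\partial f)^{-1}(\by^*)$: then $(\partial f)^{-1}(\by^*)=\{\bx\}$ and $d\big(x,(\partial f)^{-1}(\by^*)\big)=\|x-\bx\|$ for every $x$, which turns \eqref{charac_subreg} into \eqref{charac_strong_subreg} and the metric-subregularity estimate \eqref{subreg} into the strong one \eqref{strongsubreg}, \emph{with the same constants}.

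For the ``if'' direction I would argue as follows. Assume \eqref{charac_strong_subreg} holds on a neighborhood $U$ with constant $c$. For $x\in U\setminus\{\bx\}$ the inequality is strict, so $x$ is not a minimizer of $f(\cdot)-\langle\by^*,\cdot\rangle$ and hence $x\notin(\partial f)^{-1}(\by^*)$; thus $(\partial f)^{-1}(\by^*)\cap U=\{\bx\}$, $\bx$ is isolated, and by the remark above $(\partial f)^{-1}(\by^*)=\{\bx\}$. Then \eqref{charac_strong_subreg} is literally \eqref{charac_subreg}, Theorem~\ref{th_subreg} gives metric subregularity at $\bx$ for $\by^*$ with constant $1/c$, and since the solution set is $\{\bx\}$ this reads $\|x-\bx\|\le(1/c)\,d(\by^*,\partial f(x))$ near $\bx$, i.e. strong subregularity with constant $1/c$.

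For the ``only if'' direction, assume $\partial f$ is strongly subregular at $\bx$ for $\by^*$ with constant $\kappa$. From $d\big(x,(\partial f)^{-1}(\by^*)\big)\le\|x-\bx\|$ (as $\bx$ lies in the solution set), \eqref{strongsubreg} immediately yields \eqref{subreg}, so $\partial f$ is metrically subregular at $\bx$ for $\by^*$ with the same constant $\kappa$; and, as already noted after \eqref{strongsubreg}, strong subregularity forces $\bx$ to be isolated in $(\partial f)^{-1}(\by^*)$, hence $(\partial f)^{-1}(\by^*)=\{\bx\}$ by convexity. Applying Theorem~\ref{th_subreg} then gives \eqref{charac_subreg} for every $c<1/(4\kappa)$, which after replacing $d\big(x,(\partial f)^{-1}(\by^*)\big)$ by $\|x-\bx\|$ is exactly \eqref{charac_strong_subreg}.

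I do not expect any genuinely hard step: the one point needing care is the reduction of the solution set to $\{\bx\}$, which is precisely where convexity of $f$ is used essentially (so, unlike the first half of Theorem~\ref{th_subreg}, this argument does not survive without convexity). Everything else is bookkeeping --- verifying that the constant carries over unchanged between the metric and the strong estimates, and matching the quantitative bounds $c<1/(4\kappa)$ and $1/c$ stated in the theorem.
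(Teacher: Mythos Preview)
Your proposal is correct and follows essentially the same route as the paper: both directions are reduced to Theorem~\ref{th_subreg} after identifying $d\big(x,(\partial f)^{-1}(\by^*)\big)$ with $\|x-\bx\|$.

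There is one small but noteworthy difference in how this identification is made. You use convexity of $(\partial f)^{-1}(\by^*)$ to conclude that once $\bx$ is isolated the solution set is \emph{globally} the singleton $\{\bx\}$, so the identity $d\big(x,(\partial f)^{-1}(\by^*)\big)=\|x-\bx\|$ holds everywhere. The paper instead keeps the argument purely local: knowing only $(\partial f)^{-1}(\by^*)\cap\ball_{2a}(\bx)=\{\bx\}$, it observes that for $x\in\ball_a(\bx)$ any other $z\in(\partial f)^{-1}(\by^*)$ satisfies $\|x-z\|\ge a\ge\|x-\bx\|$, whence the distance to the solution set is still $\|x-\bx\|$ on the smaller ball. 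Your argument is cleaner, but the paper's neighborhood-shrinking trick does not rely on convexity of the solution set; this is precisely what allows the subsequent Remark to assert that the ``only if'' direction, like the corresponding part of Theorem~\ref{th_subreg}, survives for the limiting subdifferential in Asplund spaces. So your caveat that ``this argument does not survive without convexity'' is accurate for your proof, but the paper's variant does.
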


\begin{proof}
  Assume first that~\eqref{charac_strong_subreg} holds. Let $x\in U$ be such that $\by^*\in\partial f(x)$. Then
  $$\langle \by^*,x-\bx\rangle \geq f(x)-f(\bx),$$
  and~\eqref{charac_strong_subreg} implies $x=\bx$. Therefore,
  $(\partial f)^{-1}(\by)\cap U=\{\bx\}$. In addition,~\eqref{charac_strong_subreg} implies~\eqref{charac_subreg}, and hence Theorem~\eqref{th_subreg} implies that $\partial f$ is strongly subregular at $\bx$ for $\by^*$ with constant~$1/c$.

  Conversely, if $\partial f$ is strongly subregular at $\bx$ for $\by^*$ with constant $\kappa$, then there is some neighborhood $U$ of $\bx$ such that~\eqref{charac_subreg} holds for all  $c<1/(4\kappa)$ and $(\partial f)^{-1}(\by^*)\cap U=\{\bx\}$. We may assume without loss of generality that $U=\ball_{2a}(\bx)$, for some positive constant $a$. Pick any $x\in\ball_{a}(\bx)$ and let $z\in(\partial f)^{-1}(\by^*)$. If $z\not\in\ball_{2a}(\bx)$, then
  $$\|x-z\|\geq\|z-\bx\|-\|x-\bx\|\geq 2a-a=a\geq\|x-\bx\|;$$
  whence,
  $$d(x,(\partial f)^{-1}(\by^*))=d(x,(\partial f)^{-1}(\by^*)\cap\ball_{2a}(\bx))=\|x-\bx\|.$$
  Then~\eqref{charac_strong_subreg} holds for any $x\in\ball_a(\bx)$.
\end{proof}

\begin{rem}
Similar observations to the ones in Remark~\ref{rem:1} apply to Theorem~\ref{th_strong_subreg}.
\end{rem}

A point $\bx$ is a global minimizer of the function $f$ if and only if $0\in\partial f(\bx)$. It follows from Theorem~\ref{th_strong_subreg}  that
 $\del f$ is strongly metrically subregular at $\bx$ for $0$ if and
only if the mapping $f$ satisfies the following {\em quadratic growth condition}:
$$ f(x)\geq \inf f +c\|x-\bx\|^2\; \text{ for all } x \text{ close to }\bx.$$
Particular forms of the above inequality are known to be equivalent to the second order sufficient condition in
nonlinear programming problems with qualified constraints (see, e.g.,~\cite{BIO95a}).

%\michel It appears that the strong metric subregularity of the subdifferential of a proper lower semicontinuous convex function is
%closely tied to the strong convexity of the function in question. {\em Blah blah blah} \michel

The following result regarding the strong subregularity of the subdifferential of the sum of two functions is a straightforward consequence of the characterization in Theorem~\ref{th_strong_subreg}.

\begin{corollary}[Sum of strongly subregular subdifferentials]
Let $f,g \in \Gamma(X)$ and let $\bx\in X$ and $\by^*, \bw^* \in X^*$ such that $\by^* \in \partial f(\bx)$ and $ \bw^* \in \partial g(\bx)$.  If  $\partial f$ and $\partial g$ are respectively strongly subregular at $\bx$ for $\by^*$ and $\bx$ for $\bw^*$ then $\partial (f+g)$ is strongly subregular at $\bx$ for $\by^*+\bw^*$.
\end{corollary}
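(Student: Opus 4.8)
The plan is to reduce the statement to the quadratic growth characterization of Theorem~\ref{th_strong_subreg} and to exploit the fact that growth conditions simply add up. First I would invoke Theorem~\ref{th_strong_subreg} twice: since $\partial f$ is strongly subregular at $\bx$ for $\by^*$, there exist a neighborhood $U_1$ of $\bx$ and a constant $c_1>0$ with
\[
f(x)\geq f(\bx)+\langle \by^*,x-\bx\rangle+c_1\|x-\bx\|^2\quad\text{for all }x\in U_1,
\]
and likewise a neighborhood $U_2$ of $\bx$ and a constant $c_2>0$ with the analogous inequality for $g$ and $\bw^*$ on $U_2$.

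Next I would add these two inequalities on $U:=U_1\cap U_2$, which is again a neighborhood of $\bx$, to obtain
\[
(f+g)(x)\geq (f+g)(\bx)+\langle \by^*+\bw^*,x-\bx\rangle+(c_1+c_2)\|x-\bx\|^2\quad\text{for all }x\in U.
\]
Before feeding this back into Theorem~\ref{th_strong_subreg}, I must verify its hypotheses for the function $f+g$. Convexity and lower semicontinuity of $f+g$ are inherited from $f$ and $g$; properness follows because $\bx\in\dom f\cap\dom g$ (as $\partial f(\bx)$ and $\partial g(\bx)$ are nonempty), so $\bx\in\dom(f+g)$, and $f+g$ never takes the value $-\infty$; hence $f+g\in\Gamma(X)$. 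Moreover, from the very definition of the convex subdifferential one has the elementary inclusion $\partial f(\bx)+\partial g(\bx)\subseteq\partial(f+g)(\bx)$ --- just add the two subgradient inequalities --- so $\by^*+\bw^*\in\partial(f+g)(\bx)$; note that no constraint qualification is needed here, since only this inclusion (not the reverse) is used.

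With the hypotheses verified, the displayed growth inequality is precisely condition~\eqref{charac_strong_subreg} for $f+g$ at $\bx$ for $\by^*+\bw^*$ with constant $c_1+c_2$, so the converse part of Theorem~\ref{th_strong_subreg} yields that $\partial(f+g)$ is strongly subregular at $\bx$ for $\by^*+\bw^*$ (in fact with constant $1/(c_1+c_2)$). I do not anticipate a genuine obstacle: the only points requiring care are the bookkeeping of $\Gamma(X)$-membership of $f+g$ and the observation that the trivial half of the sum rule suffices; the quantitative content of the claim is already packaged inside Theorem~\ref{th_strong_subreg}.
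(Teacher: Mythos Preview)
Your proposal is correct and follows essentially the same route as the paper: apply Theorem~\ref{th_strong_subreg} to $f$ and $g$ separately, add the resulting quadratic growth inequalities on $U_1\cap U_2$, and feed the sum back into the converse direction of Theorem~\ref{th_strong_subreg}. Your version is actually slightly more careful than the paper's, in that you explicitly verify $f+g\in\Gamma(X)$ and $\by^*+\bw^*\in\partial(f+g)(\bx)$ via the trivial inclusion $\partial f(\bx)+\partial g(\bx)\subseteq\partial(f+g)(\bx)$, whereas the paper leaves these points implicit.
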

\begin{proof}
By Theorem~\ref{th_strong_subreg}, the strong subregularity of $\partial f$ (respectively $\partial g$)
 at $\bx$ for $\by^*$ (respectively at $\bx$ for $\bw^*$)
yields (and is actually equivalent to) the existence of two neigborhoods $U_1$ and $U_2$ of $\bx$
and two positive constants $c_1$ and $c_2$ such that
\begin{equation}
f(x) \geq f(\bx) +\langle \by^*, x-\bx\rangle + c_1\|x-\bx\|^2,\; \forall x \in U_1,
\end{equation}
and
\begin{equation}
g(x) \geq g(\bx) +\langle \bw^*, x-\bx\rangle + c_2\|x-\bx\|^2,\; \forall x \in U_2.
\end{equation}
By adding to each other the above inequalities we get
\begin{equation}
(f+g)(x) \geq (f+g)(\bx) +\langle \by^*+\bw^*, x-\bx\rangle + (c_1+c_2)\|x-\bx\|^2,\; \forall x \in U,
\end{equation}
where $U:=U_1\cap U_2$; that is, $\partial (f+g)$ is strongly subregular at $\bx$ for $\by^*+\bw^*$,
which completes the proof.
\end{proof}

It turns out that strong subregularity of the subdifferential is also related to local strong monotonicity, which is defined as follows.

\begin{definition}\label{def1}
Given a mapping $T:X\tto X^*$, the point $(\bar x,\bar y^*)\in\gph T$  is said to be \emph{(locally) strongly monotonically related} to $\gph T$
 if there are some neighborhoods $U$ of $\bx$ and $V$ of $\by^*$ together
with some positive constant $c$ such that
$$\langle y^*-\by^*,x-\bx\rangle\geq c\|x-\bx\|^2,\quad\text{for all } (x,y^*)\in\gph T\cap(U\times V).$$
\end{definition}

When $X$ is a Hilbert space, it is easy to check that a point $(\bx,\by)\in\gph T$ is locally strongly monotonically related to $\gph T$ with constant $c$ if and only if the point $(\bx,\by-c\bx)\in\gph (T-cI)$ is
\emph{locally monotonically related} to $\gph (T-cI)$, where $I$ stands for the identity mapping; that is,
$$\langle z-(\by-c\bx),x-\bx\rangle\geq 0,\quad\text{for all } (x,y)\in\gph (T-cI)\cap(U\times V).$$

\begin{rem}
Recall that a mapping $T:X\tto X^*$ is strongly monotone if there exists $c>0$ such that $\langle y_2^*-y_1^*,x_2-x_1\rangle \geq c \|x_2-x_1\|^2$ whenever $y_1^*\in T(x_1), y_2^*\in T(x_2).$ Consequently, if a mapping $T$ is  strongly monotone then any point $(\bar x,\bar y^*)\in\gph T$ is locally strongly monotonically related to $\gph T$.
\end{rem}

\begin{theorem}[Characterization of strong metric subregularity of subdifferentials]\label{thcharstrgsubreg}
Consider  a function $f$ in  $\Gamma(X)$ and points $\bx\in X$ and
$\by^*\in X^*$ such that $\by^*\in\partial f(\bx)$. The following assertions are equivalent.\medbreak
\noindent {\rm (i)}  The set-valued mapping $\partial f$ is strongly subregular at $\bx$ for  $\by$.\smallbreak
\noindent {\rm (ii)} There exist a neighborhood $U$ of $\bx$ and a
  positive constant $c$ such that
  \begin{equation}\label{charac_strong_subreg_bis}
    f(x)\geq f(\bx)+\langle \by^*,x-\bx\rangle +c\|x-\bx\|^2 \text{ whenever } x\in U.
  \end{equation}
{\rm (iii)}  There exist  a neighborhood $U$ of $\bx$ and a positive constant $c$  such that
\begin{equation}\label{eq:str_mon}
\langle y^*-\by^*,x-\bx\rangle \geq  c\|x-\bx\|^2,\quad \forall x \in U, y^*\in \partial f(x).
\end{equation}
{\rm (iv)}  The point $(\bx,\by)$ is locally strongly monotonically related to $\gph(\partial f)$.

\end{theorem}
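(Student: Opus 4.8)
The plan is to prove the cycle of equivalences (ii)$\Leftrightarrow$(i), (ii)$\Rightarrow$(iii), (iii)$\Rightarrow$(iv), and (iv)$\Rightarrow$(ii), so that together with the implications already available the four assertions are seen to be equivalent. The equivalence (i)$\Leftrightarrow$(ii) is exactly Theorem~\ref{th_strong_subreg}, so nothing needs to be done there except to cite it. The remaining work therefore concerns the passage from the quadratic growth inequality~\eqref{charac_strong_subreg_bis} to the strong monotonicity-type estimates in (iii) and (iv), and back.

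For (ii)$\Rightarrow$(iii) I would argue essentially as in the first part of the proof of Theorem~\ref{th_subreg}, but with $\bx$ playing the role of the nearest point of $(\partial f)^{-1}(\by^*)$, which is now legitimate because (ii) forces $\bx$ to be an isolated zero. Concretely, fix $x\in U$ and $y^*\in\partial f(x)$; from $y^*\in\partial f(x)$ one has $\langle y^*,\bx-x\rangle\leq f(\bx)-f(x)$, i.e. $\langle y^*,x-\bx\rangle\geq f(x)-f(\bx)$, while~\eqref{charac_strong_subreg_bis} gives $f(x)-f(\bx)\geq\langle\by^*,x-\bx\rangle+c\|x-\bx\|^2$. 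Subtracting yields $\langle y^*-\by^*,x-\bx\rangle\geq c\|x-\bx\|^2$, which is exactly~\eqref{eq:str_mon}. The implication (iii)$\Rightarrow$(iv) is immediate: taking $V=X^*$ (or any neighborhood of $\by^*$), the inequality in~\eqref{eq:str_mon} restricted to $(x,y^*)\in\gph(\partial f)\cap(U\times V)$ is precisely the defining inequality in Definition~\ref{def1}, so $(\bx,\by^*)$ is locally strongly monotonically related to $\gph(\partial f)$ with the same constant $c$ and neighborhood $U$.

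The substantive step is (iv)$\Rightarrow$(ii), and this is where I expect the main obstacle. The issue is that local strong monotonicity only controls inner products $\langle y^*-\by^*,x-\bx\rangle$ over points $(x,y^*)\in\gph(\partial f)$ with both $x$ near $\bx$ and $y^*$ near $\by^*$, whereas the quadratic growth inequality~\eqref{charac_strong_subreg_bis} is a statement about values of $f$ at all $x$ near $\bx$, with no subgradient in sight. The natural bridge is to integrate the monotonicity estimate along segments, but one must ensure the subgradients encountered stay in the neighborhood $V$. I would proceed as follows: shrink $U$ to a ball $\ball_r(\bx)$ small enough that, for $x\in\ball_r(\bx)$, the restricted function $f$ is (by lower semicontinuity and convexity, and after a standard localization/regularization if needed) controlled; then for such $x$ write, using convexity and the subdifferential inequality, $f(x)-f(\bx)-\langle\by^*,x-\bx\rangle\geq\langle y^*-\by^*,x-\bx\rangle$ for any $y^*\in\partial f(x)$ — but this only gives $\geq c\|x-\bx\|^2$ when $y^*\in V$. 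To handle the case $\partial f(x)\cap V=\emptyset$ (or $\partial f(x)=\emptyset$), I would invoke Ekeland's variational principle exactly as in the converse part of Theorem~\ref{th_subreg}: if $f(z)-f(\bx)-\langle\by^*,z-\bx\rangle<c\|z-\bx\|^2$ for some $z$ close to $\bx$, apply Theorem~\ref{th:Ekeland} to $f(\cdot)+\langle\by^*,\bx-\cdot\rangle$ to produce a nearby point $u$ minimizing a perturbed convex function, hence with $0\in\partial f(u)-\by^*+2c\|z-\bx\|\ball$, giving $y^*\in\partial f(u)$ with $\|y^*-\by^*\|$ small (so $y^*\in V$ once $z$ is close enough) and $\langle y^*-\by^*,u-\bx\rangle\leq\|y^*-\by^*\|\,\|u-\bx\|$ too small to satisfy~\eqref{eq:str_mon}, a contradiction. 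The delicate bookkeeping is choosing the radii so that $u$ stays in $U$ and $y^*$ in $V$ simultaneously; this mirrors the estimate $\|u-\bx\|\le\tfrac32\|z-\bx\|$ used earlier, and the bound $c<1/(4\kappa)$ will reappear in the constant.

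Finally, I would note that with (ii)$\Rightarrow$(iii)$\Rightarrow$(iv)$\Rightarrow$(ii) established and (i)$\Leftrightarrow$(ii) quoted from Theorem~\ref{th_strong_subreg}, all four assertions are equivalent, which completes the proof. It is worth remarking that, just as in Remark~\ref{rem:1}, the implications (ii)$\Rightarrow$(iii)$\Rightarrow$(iv) use only the subdifferential inequality and not convexity, so they persist for the convex subdifferential of an arbitrary function; only the return implication (iv)$\Rightarrow$(ii), through Ekeland's principle and the sum rule, genuinely needs $f\in\Gamma(X)$ (or, in Asplund spaces, the limiting-subdifferential substitute together with the assumption that $\bx$ locally minimizes $f(\cdot)-\langle\by^*,\cdot\rangle$).
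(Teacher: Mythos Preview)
Your argument is correct, but it closes the cycle by a different and considerably heavier route than the paper. You share with the paper the steps (i)$\Leftrightarrow$(ii) (quoting Theorem~\ref{th_strong_subreg}), (ii)$\Rightarrow$(iii) (subdifferential inequality plus the growth condition), and (iii)$\Rightarrow$(iv) (trivial). The divergence is in the last link: you aim for (iv)$\Rightarrow$(ii) and rerun Ekeland's variational principle and the sum rule, essentially reproving the hard direction of Theorem~\ref{th_strong_subreg}; with the bookkeeping you indicate this does go through, at the cost of a factor~$4$ in the constant. The paper instead closes the loop via (iv)$\Rightarrow$(i) in two lines: from $\langle y^*-\by^*,x-\bx\rangle\ge c\|x-\bx\|^2$ for $(x,y^*)\in\gph(\partial f)\cap(U\times V)$, Cauchy--Schwarz gives $\|x-\bx\|\le (1/c)\|y^*-\by^*\|$, hence $\|x-\bx\|\le(1/c)\,d(\by^*,\partial f(x)\cap V)$, which is precisely the definition~\eqref{strongsubreg2} of strong subregularity with constant $1/c$. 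This direct passage uses no convexity, no Ekeland, and yields the sharp constant; since (i)$\Leftrightarrow$(ii) is already in hand, there is no need to reach (ii) again. Your detour works, but it duplicates effort already encapsulated in Theorem~\ref{th_strong_subreg}.
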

%%%%%%%%%%%%%%%%%%%%%%%%%%%%%%%%%%%%%%%%%%%
\begin{proof}
{\bf (i) $\Leftrightarrow$ (ii)} has been proved in Theorem~\ref{th_strong_subreg}.\\
{\bf (ii) $\Rightarrow$ (iii).} Suppose that (ii) holds and pick any $x \in U$ and $y^* \in \partial f(x)$. Then $f(\bx) \geq f(x) + \langle y^*,\bx-x \rangle$, and thus
$$\langle y^*-\by^*,x-\bx \rangle= \langle y^*,x-\bx \rangle +\langle \by^*,\bx-x \rangle \geq c\|x-\bx\|^2,$$
and (iii) holds.\\
{\bf (iii) $\Rightarrow$ (iv)} is straightforward.\\
{\bf  (iv) $\Rightarrow$ (i).} Let $c$, $U$ and $V$ as in Definition~\ref{def1}. Then
$$\langle y^*-\by^*,x-\bx\rangle \geq  c\|x-\bx\|^2,\quad \text{for all } (x, y^*)\in \gph(\partial f)\cap(U\times V).$$
Take $x \in U\backslash\{\bx\}$. If $\partial f(x)\cap V=\emptyset$ we are done. Otherwise, pick any $y^* \in \partial f(x)\cap V$. Then
$$\|x-\bx\|^2\leq \displaystyle\frac{1}{c} \langle y^*-\by^*,x-\bx\rangle \leq \frac{1}{c} \|y^*-\by^*\|\|x-\bx\|.$$
Hence, $\|x-\bx\|\leq \displaystyle\frac{1}{c}  \|y^*-\by^*\|$, and being this valid for all $y^* \in \partial f(x)\cap V$, we obtain
$$\|x-\bx\|\leq\displaystyle\frac{1}{c}d(\by^*,\partial f(x)\cap V),$$
i.e., $\partial f$ is strongly subregular at $\bx$ for $\by^*$ with constant $\displaystyle\frac{1}{c}$.
\end{proof}

The {\em contingent derivative}, a graphical concept of derivative for set-valued maps, was initiated by Aubin in~\cite{AUB81};
its definition strongly relies on the notion of contingent cone (independently introduced by Bouligand and Severy in 1930, see comments in~\cite[p. 133]{M06}).  Recall that if $K\subset X$ and $x \in \overline{K}$ ($\overline{K}$ denoting the closure of $K$) then the \emph{contingent cone} $T_K(x)$ is defined by
$$T_K(x):=\left\{v \in X \,\bigg|\, \displaystyle \liminf_{\tau \to 0^+} \frac{d(x+v\tau,K)}{\tau}=0\right\}=\limsup_{\tau\downarrow 0}\frac{K-x}{\tau}.$$

\noindent The contingent derivative of  $F:X\tto Y$ at $(\bx,\by)\in\gph F$ is the set-valued map $DF(\bx\for\by)$ from $X$ to $Y$ defined by
$$DF(\bx\for\by)(w):=\{z\in Y\mid(w,z)\in T_{\gph F}(\bx,\by)\}.$$
Note that $D\partial f(\bx\for\nabla f(\bx))(w)=\{\nabla^2f(\bx)w\}$ when $f$ is twice (Fr\'echet) differentiable (see, e.g.,~\cite[Proposition~5.1.2]{AF90}). %or~\cite[Corollary~4.2.5]{BV_CF}).
For more details on the contingent derivative, one can refer to the comprehensive monograph~\cite{AF90} by Aubin and Frankowska.

\begin{corollary}\label{CorContingent}
  Consider a function $f$ in $\Gamma(X)$. Then $\partial f$ is strongly subregular at $\bx$ for $\by^*$ if there is a constant $c>0$ such that $D\partial f(\bx\for\by^*)$ is \emph{positive-definite with modulus} $c$ in the sense that
  \begin{equation}\label{locmoncont}
    \langle z^*,w\rangle\geq c\|w\|^2,\text{ for all }w \in X\text{ and }z^*\in D\partial f(\bx\for\by^*)(w).
  \end{equation}
  Moreover, the converse also holds true when $\dim X<\infty$. Specifically, if~\eqref{locmoncont} holds then $\partial f$ is strongly subregular at $\bx$ for $\by$ for any constant $\kappa>1/c$.
  \end{corollary}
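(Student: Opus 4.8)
The plan is to derive both directions from Theorem~\ref{thcharstrgsubreg}, using condition (iii) (equivalently (ii)) as the bridge, since the contingent derivative is precisely the first-order graphical information encoded in $\gph\partial f$ near $(\bx,\by^*)$.

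\medskip

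\noindent\emph{Sufficiency.} Assume~\eqref{locmoncont} holds. I would argue by contradiction: if $\partial f$ is not strongly subregular at $\bx$ for $\by^*$, then by Theorem~\ref{thcharstrgsubreg}(iii) no neighborhood $U$ and constant work, so for every $n$ there exist $x_n\to\bx$, $x_n\neq\bx$, and $y_n^*\in\partial f(x_n)$ with
$$\langle y_n^*-\by^*,\,x_n-\bx\rangle < \tfrac{1}{n}\,\|x_n-\bx\|^2.$$
Set $\tau_n:=\|x_n-\bx\|\to0^+$ and $w_n:=(x_n-\bx)/\tau_n$, so $\|w_n\|=1$. The real difficulty, and the reason the converse is only claimed in finite dimensions, is that to extract a limiting direction $w$ with $(w,z^*)\in T_{\gph\partial f}(\bx,\by^*)$ one needs the unit vectors $w_n$ to have a convergent subsequence, which is guaranteed when $\dim X<\infty$. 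Passing to such a subsequence, $w_n\to w$ with $\|w\|=1$. One must also control $z_n^*:=(y_n^*-\by^*)/\tau_n$; here I would invoke local boundedness of the convex subdifferential together with a local Lipschitz/continuity estimate near an interior point of $\dom f$ — since $\by^*\in\partial f(\bx)$, $f$ is finite and continuous near $\bx$, hence $\partial f$ is locally bounded, so the $\|y_n^*\|$ stay bounded; combined with the scaling one shows $z_n^*$ stays bounded and (extracting again, using weak-$*$ sequential compactness of bounded sets in the dual of a finite-dimensional space) $z_n^*\to z^*$. By definition of the contingent cone, $(w_n,z_n^*)\to(w,z^*)$ with $(x_n,y_n^*)=(\bx,\by^*)+\tau_n(w_n,z_n^*)\in\gph\partial f$ gives $(w,z^*)\in T_{\gph\partial f}(\bx,\by^*)$, i.e.\ $z^*\in D\partial f(\bx\for\by^*)(w)$. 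But $\langle z_n^*,w_n\rangle<1/n\to0$, so $\langle z^*,w\rangle\le0$, contradicting~\eqref{locmoncont} since $\|w\|=1>0$. This proves sufficiency (and, crucially, this direction does \emph{not} use $\dim X<\infty$: one merely needs \emph{some} $(w,z^*)$ in the contingent cone, and even in infinite dimensions one can run the same contradiction provided the extraction succeeds — but without compactness it may fail, which is exactly why the statement hedges).

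\medskip

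\noindent\emph{Necessity when $\dim X<\infty$.} Assume $\partial f$ is strongly subregular at $\bx$ for $\by^*$. By Theorem~\ref{thcharstrgsubreg}(iii) (equivalently Theorem~\ref{th_strong_subreg} with the explicit constant) there are a neighborhood $U$ and a constant $c'<1/\kappa$ — here I would take $\kappa$ to be a subregularity constant so that, given any prescribed $\kappa_0>1/c$ with $c$ as in~\eqref{locmoncont} to be produced, we may choose $\kappa$ close enough and get $c'$ as close to $1/\kappa$ as needed — such that $\langle y^*-\by^*,x-\bx\rangle\ge c'\|x-\bx\|^2$ for all $x\in U$, $y^*\in\partial f(x)$. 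Now take any $w\in X$ and $z^*\in D\partial f(\bx\for\by^*)(w)$; by definition there are $\tau_n\downarrow0$ and $(w_n,z_n^*)\to(w,z^*)$ with $(\bx+\tau_n w_n,\;\by^*+\tau_n z_n^*)\in\gph\partial f$. For $n$ large, $\bx+\tau_n w_n\in U$, so plugging into the monotone-type inequality, dividing by $\tau_n^2>0$, gives $\langle z_n^*,w_n\rangle\ge c'\|w_n\|^2$; letting $n\to\infty$ yields $\langle z^*,w\rangle\ge c'\|w\|^2$. Since this holds with $c'$ arbitrarily close to $1/\kappa$ and $\kappa$ can be taken arbitrarily close to any value exceeding $1/c$ — i.e.\ for every $c<1/\kappa$ that strong subregularity with constant $\kappa$ allows — we conclude~\eqref{locmoncont} holds, with the quantitative statement that it holds for every modulus $c$ such that $\partial f$ is strongly subregular with some constant $\kappa<1/c$; equivalently, reading it the other way, if~\eqref{locmoncont} holds with modulus $c$ then strong subregularity holds with any constant $\kappa>1/c$.

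\medskip

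\noindent The main obstacle is the compactness issue flagged above in the sufficiency proof (and implicitly in why necessity needs no dimension hypothesis but sufficiency's contradiction argument does in general): extracting a convergent subsequence of unit vectors $w_n$ and of the bounded dual sequence $z_n^*$. In finite dimensions both are immediate; this is the only place $\dim X<\infty$ enters, and it is why the corollary states the converse only in that setting while the sufficient direction is unconditional. A minor point to be careful about is that $D\partial f(\bx\for\by^*)$ may be empty-valued in some directions, in which case~\eqref{locmoncont} is vacuous there and the necessity argument has nothing to check — so no difficulty arises. The quantitative constant bookkeeping ($c<1/(4\kappa)$ from Theorem~\ref{th_strong_subreg}, versus the $\kappa>1/c$ claimed here) should be re-examined: the clean relation $\kappa>1/c$ follows from using the \emph{equivalent} characterization (iii) of Theorem~\ref{thcharstrgsubreg}, which carries constant $1/c$, rather than from the possibly-lossy constant in Theorem~\ref{th_strong_subreg}, so I would route necessity through (iii) to avoid the factor $4$.
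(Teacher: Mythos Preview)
Your necessity argument (strong subregularity $\Rightarrow$ \eqref{locmoncont}) matches the paper's first paragraph and is correct; note that you never actually use $\dim X<\infty$ there, nor does the paper.

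The sufficiency argument has a genuine gap. Negating condition (iii) of Theorem~\ref{thcharstrgsubreg} produces $x_n\to\bx$ and $y_n^*\in\partial f(x_n)$ with $\langle y_n^*-\by^*,x_n-\bx\rangle<\tfrac1n\|x_n-\bx\|^2$, but nothing here bounds $z_n^*=(y_n^*-\by^*)/\|x_n-\bx\|$. Local boundedness of $\partial f$ only bounds $\|y_n^*-\by^*\|$; dividing by $\tau_n\to0$ may still blow up --- if $\partial f(\bx)$ is not a singleton, $y_n^*$ can accumulate at a point of $\partial f(\bx)$ other than $\by^*$, so $\|y_n^*-\by^*\|\not\to0$ and $\|z_n^*\|\to\infty$. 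Rescaling then forces the primal limit direction to be $w=0$, where \eqref{locmoncont} is vacuous, and no contradiction results.

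The paper avoids this by contradicting the strong-subregularity estimate \emph{directly} rather than going through (iii): fixing $\kappa>1/c$, if no neighborhoods make $\|x-\bx\|\le\kappa\|y^*-\by^*\|$ hold on $\gph\partial f\cap(U\times V)$, one obtains $(x_n,y_n^*)\in\gph\partial f$ converging to $(\bx,\by^*)$ with $\|x_n-\bx\|>\kappa\|y_n^*-\by^*\|$, which \emph{immediately} gives $\|z_n^*\|<1/\kappa$. Finite-dimensional compactness then extracts $(w,z^*)$ in the contingent cone with $\|w\|=1$ and $\|z^*\|\le1/\kappa$, whence $c=c\|w\|^2\le\langle z^*,w\rangle\le\|z^*\|\le1/\kappa<c$, the desired contradiction. (The paper also offers a one-line alternative: \eqref{locmoncont} forces $D\partial f(\bx\for\by^*)^{-1}(0)=\{0\}$, and then \cite[Theorem~5.3]{DR04} applies.) The idea you are missing is to build the contradicting sequence so that the dual increments are \emph{a priori} comparable to the primal ones.
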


\begin{proof}
  Suppose first that $\partial f$ is strongly subregular at $\bx$ for $\by^*$. Let $w\in X$ and consider $z^*\in D\partial f(\bx,\by^*)(w)$. Then there are $(x_n,y_n^*)\in\gph\partial f$ and $\tau_n\downarrow 0$ with $(x_n,y_n^*)\to(\bx,\by^*)$ and $[(x_n,y_n^*)-(\bx,\by^*)]/\tau_n\to (w,z^*)$. Theorem~\ref{thcharstrgsubreg} implies the existence of a neighborhood $U$ and a constant $c$ such that~\eqref{eq:str_mon} holds. Since $x_n\in U$ eventually and $y_n^*\in\partial f(x_n)$, one has
  $$\langle y_n^*-\by^*,x_n-\bx\rangle \geq c\|x_n-\bx\|^2\quad \text{eventually.}$$
  Hence,
  $$\left\langle \frac{y_n^*-\by^*}{\tau_n},\frac{x_n-\bx}{\tau_n}\right\rangle \geq c\left\|\frac{x_n-\bx}{\tau_n}\right\|^2, \text{ eventually.}$$
  Making $n\to\infty$ we obtain $\langle z,w\rangle\geq c\|w\|^2.$

  We offer two proofs of the converse. For the first one, observe that~\eqref{locmoncont} implies in particular that $D\partial f(\bx\for\by^*)^{-1}(0)=\{0\}$, and then $\partial f$ is strongly subregular at $\bx$ for $\by^*$ by~\cite[Theorem~5.3]{DR04}.

  For the second proof of the converse, choose any $\kappa>1/c$. We are going to prove by contradiction that there are some neighborhoods $U$ of $\bx$ and $V$ of $\by^*$ such that
  \begin{equation}\label{cor_strsub}
    \|x-\bx\|\leq\kappa\|y^*-\by^*\|\text{ whenever }(x,y^*)\in(\gph\partial f)\cap (U\times V).
  \end{equation}
  Otherwise, for all $n\in\mathbb{N}$ there is $(x_n,y^*_n)\in\gph\partial f$ with $\|x_n-\bx\|\leq 1/n$, $\|y^*_n-\by^*\|\leq 1/n$ and such that $\|x_n-\bx\|>\kappa\|y^*_n-\by^*\|$. This implies $x_n\neq\bx$ for all $n$, and because of the finite dimensionality of the space $X$, the bounded sequences $(x_n-\bx)/\|x_n-\bx\|$ and $(y^*_n-\by^*)/\|x_n-\bx\|$ must have some convergent subsequences. Replacing the original sequences by these subsequences we may assume that there are some points $w$ and $z^*$ with
  $$\left(\frac{x_n-\bx}{\|x_n-\bx\|},\frac{y^*_n-\by^*}{\|x_n-\bx\|}\right)\to (w,z^*)\in\gph D\partial f(\bx\for\by^*).$$
  By~\eqref{locmoncont} we get
  $$1=\|w\|\geq\kappa\|z^*\|\geq\kappa c\|w\|>1,$$
  which is a contradiction. Condition~\eqref{cor_strsub} implies strong subregularity of $\partial f$ at $\bx$ for $\by^*$ with any constant $\kappa>1/c$.
\end{proof}

\begin{corollary}
  Consider a function $f$ in $\Gamma(X)$, with $\dim X<\infty$, and a point $\bx$ in $X$ such that $f$ is twice (Fr\'echet) differentiable in a neighborhood of $\bx$. Then $\nabla f$ is strongly subregular at $\bx$ for $\nabla f(\bx)$ if and only if there is a positive constant $c$ such that $\nabla^2f(\bx)$ is positive-definite with modulus $c$, that is,
  \begin{equation}\label{locmondif}
    \langle \nabla^2f(\bx)u,u\rangle \geq  c\|u\|^2,\quad \forall u \in X.
  \end{equation}
\end{corollary}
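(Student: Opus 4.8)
The plan is to derive this corollary from Corollary~\ref{CorContingent} by computing the contingent derivative of $\nabla f$ explicitly under the twice-differentiability assumption. Recall that, as noted just before Corollary~\ref{CorContingent}, when $f$ is twice Fr\'echet differentiable we have $D\partial f(\bx\for\nabla f(\bx))(w)=\{\nabla^2 f(\bx)w\}$ for every $w\in X$ (see~\cite[Proposition~5.1.2]{AF90}). Since here $\by^*=\nabla f(\bx)$ and $\partial f=\nabla f$ is single-valued near $\bx$, Corollary~\ref{CorContingent} applies verbatim.

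First I would invoke Corollary~\ref{CorContingent}: $\nabla f$ is strongly subregular at $\bx$ for $\nabla f(\bx)$ if and only if there is $c>0$ such that $D\partial f(\bx\for\nabla f(\bx))$ is positive-definite with modulus $c$, i.e.~$\langle z^*,w\rangle\geq c\|w\|^2$ for all $w\in X$ and $z^*\in D\partial f(\bx\for\nabla f(\bx))(w)$. Then I would substitute the computed value $D\partial f(\bx\for\nabla f(\bx))(w)=\{\nabla^2 f(\bx)w\}$ into this condition: the quantifier over $z^*$ collapses to the single point $z^*=\nabla^2 f(\bx)w$, so the positive-definiteness condition~\eqref{locmoncont} becomes precisely $\langle\nabla^2 f(\bx)w,w\rangle\geq c\|w\|^2$ for all $w\in X$, which is exactly~\eqref{locmondif}. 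Since $\dim X<\infty$, the ``moreover'' part of Corollary~\ref{CorContingent} gives the full equivalence rather than just one implication.

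One minor point worth flagging explicitly is that Corollary~\ref{CorContingent} is stated for the convex subdifferential $\partial f$, so I should note that since $f$ is (single-valued) differentiable in a neighborhood of $\bx$, the subdifferential $\partial f(x)$ coincides with $\{\nabla f(x)\}$ at every point $x$ of that neighborhood (a standard fact for convex functions, e.g.~via the subdifferential of a convex function containing the Fr\'echet derivative when it exists, and being a singleton there); hence $\gph\partial f$ and $\gph\nabla f$ agree locally around $(\bx,\nabla f(\bx))$, so strong subregularity of $\partial f$ and of $\nabla f$ at that point are the same property and the contingent derivatives coincide. I do not anticipate any real obstacle here: the entire content is the identification $D\partial f(\bx\for\nabla f(\bx))(w)=\{\nabla^2 f(\bx)w\}$, which is quoted from~\cite{AF90}, combined with the already-proved Corollary~\ref{CorContingent}; the only care needed is to make sure the single-valuedness reconciliation between $\partial f$ and $\nabla f$ is stated cleanly.

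\begin{proof}
Since $f$ is Fr\'echet differentiable in a neighborhood $W$ of $\bx$, for every $x\in W$ the convex subdifferential satisfies $\partial f(x)=\{\nabla f(x)\}$; in particular $\gph\partial f$ coincides with $\gph\nabla f$ on $W\times X^*$, so $\partial f$ is strongly subregular at $\bx$ for $\nabla f(\bx)$ if and only if $\nabla f$ is, and the contingent derivatives of $\partial f$ and of $\nabla f$ at $(\bx,\nabla f(\bx))$ coincide. By~\cite[Proposition~5.1.2]{AF90}, the twice Fr\'echet differentiability of $f$ at $\bx$ yields
$$D\partial f(\bx\for\nabla f(\bx))(w)=\{\nabla^2 f(\bx)w\},\quad\text{for all } w\in X.$$
Therefore the positive-definiteness condition~\eqref{locmoncont} for $D\partial f(\bx\for\nabla f(\bx))$ with modulus $c$ reduces exactly to
$$\langle\nabla^2 f(\bx)w,w\rangle\geq c\|w\|^2,\quad\text{for all } w\in X,$$
that is, to~\eqref{locmondif}. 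Since $\dim X<\infty$, Corollary~\ref{CorContingent} asserts that $\partial f$ is strongly subregular at $\bx$ for $\nabla f(\bx)$ if and only if such a constant $c>0$ exists, which is precisely the claimed equivalence.
\end{proof}
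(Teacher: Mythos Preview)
Your proof is correct and follows exactly the paper's approach: the paper's proof is the single line ``Apply Corollary~\ref{CorContingent},'' relying on the identification $D\partial f(\bx\for\nabla f(\bx))(w)=\{\nabla^2 f(\bx)w\}$ already recorded just before that corollary. You spell out the same argument in more detail, including the (harmless) explicit remark that $\partial f$ and $\nabla f$ agree locally, but there is no substantive difference.
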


\begin{proof} %[Proof 1]
  Apply Corollary~\ref{CorContingent}.
\end{proof}

%\begin{proof}[Proof 2]
%  Suppose first that $\nabla f$ is strongly subregular at $\bx$ for $\nabla f(\bx)$. By Proposition~\ref{PrettyOne}, there is some $\alpha>0$ such that $f$ is twice differentiable in $\ball_\alpha(\bx)$ and
%  $$\langle\nabla f(x)-\nabla f(\bx),x-\bx\geq c\|x-\bx\|^2\text{ whenever }x\in\ball_\alpha(\bx).$$
%  Let $u\in X$. If $u=0$ we are done. Otherwise, take $\bar u:=\alpha u/\|u\|$ and consider the function $\phi(t):=f(\bx+t\bu)$, for $t\in \R$. If $t\in(0,1)$, one has $\phi'(t)=\nabla f(\bx+t\bu)\bu$, since $\bx+t\bu\in\ball_\alpha(\bx)$. Hence
%  $$t\langle\phi'(t)-\phi'(0),\bu\geq ct^2\|\bu\|^2,$$
%  and therefore
%  $$\langle \bu,\nabla^2 f(\bx)\bu\rangle=\phi''(0)\geq c\|\bu\|^2,$$
%  from where~\eqref{locmondif} directly follows.
%
%  Conversely??? Mean Value Th?
%\end{proof}

The next result provides a sufficient condition for strong subregularity at any point in the subdifferential.

\begin{proposition}
Let $f$ be a function in $\Gamma(X)$. Assume that there is a neighborhood $U$ of some point $\bx \in X$ such that for all $x \in U,\, \lambda \in (0,1)$,
\begin{equation}\label{strgbx}
f((1-\lambda)x+\lambda \bx) \leq (1-\lambda)f(x)+\lambda f(\bx)-c\lambda(1-\lambda)\|x-\bx\|^2.
\end{equation}
Then for all $y^* \in \partial f (\bx), x \in U$,
\begin{equation}\label{stsub}
f(x)\geq f(\bx)+\langle y^*,x-\bx\rangle +c\|x-\bx\|^2.
\end{equation}
In particular, $\partial f$ is strongly subregular at $\bx$ for any point $y^*$ such that
$y^* \in \partial f(\bx)$.
%Moreover, the converse is also true when $f$ is twice continuously differentiable.
\end{proposition}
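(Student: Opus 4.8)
The plan is to derive inequality \eqref{stsub} directly from the strengthened-convexity hypothesis \eqref{strgbx} by taking a one-sided limit, and then invoke Theorem~\ref{th_strong_subreg} to conclude strong subregularity. First I would fix $x\in U$ and $y^*\in\partial f(\bx)$. By definition of the subdifferential, for every $\lambda\in(0,1)$ the point $x_\lambda:=(1-\lambda)x+\lambda\bx$ satisfies
$$f(x_\lambda)\geq f(\bx)+\langle y^*,x_\lambda-\bx\rangle=f(\bx)+(1-\lambda)\langle y^*,x-\bx\rangle.$$
On the other hand, \eqref{strgbx} gives
$$f(x_\lambda)\leq(1-\lambda)f(x)+\lambda f(\bx)-c\lambda(1-\lambda)\|x-\bx\|^2.$$
Combining the two and cancelling yields, after dividing by $(1-\lambda)>0$,
$$f(\bx)+\langle y^*,x-\bx\rangle\leq f(x)-c\lambda\|x-\bx\|^2+\frac{\lambda}{1-\lambda}\bigl(f(\bx)-f(\bx)\bigr),$$
i.e. $f(x)\geq f(\bx)+\langle y^*,x-\bx\rangle+c\lambda\|x-\bx\|^2$ for every $\lambda\in(0,1)$. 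Letting $\lambda\uparrow 1$ gives exactly \eqref{stsub}.

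Once \eqref{stsub} is established for all $x\in U$ and the chosen $y^*\in\partial f(\bx)$, the quadratic growth condition \eqref{charac_strong_subreg} of Theorem~\ref{th_strong_subreg} holds at $\bx$ for $\by^*:=y^*$ with the neighborhood $U$ and constant $c$; hence that theorem immediately yields that $\partial f$ is strongly subregular at $\bx$ for $y^*$ (with constant $1/c$). Since $y^*\in\partial f(\bx)$ was arbitrary, the ``in particular'' clause follows.

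I do not anticipate a genuine obstacle here: the argument is a short algebraic manipulation plus a limit, and the only minor points to be careful about are that $U$ should be taken so that the line segment $[x,\bx]$ lies in $U$ when $x\in U$ (which one may arrange by shrinking $U$ to a ball centered at $\bx$, so that \eqref{strgbx} is genuinely applicable), and that the cancellation of the $f(\bx)$ terms is done correctly before dividing by $1-\lambda$. If one wishes, one can also note that \eqref{stsub} can be obtained in one step by setting $\lambda$ close to $1$ directly, but taking the limit $\lambda\uparrow1$ is cleanest and recovers the full constant $c$.

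\begin{proof}
Without loss of generality assume $U$ is a ball centered at $\bx$, so that $x\in U$ implies $(1-\lambda)x+\lambda\bx\in U$ for every $\lambda\in(0,1)$. Fix $x\in U$ and $y^*\in\partial f(\bx)$, and write $x_\lambda:=(1-\lambda)x+\lambda\bx$ for $\lambda\in(0,1)$. Since $y^*\in\partial f(\bx)$,
$$f(x_\lambda)\geq f(\bx)+\langle y^*,x_\lambda-\bx\rangle=f(\bx)+(1-\lambda)\langle y^*,x-\bx\rangle.$$
Combining this with~\eqref{strgbx} we obtain
$$f(\bx)+(1-\lambda)\langle y^*,x-\bx\rangle\leq(1-\lambda)f(x)+\lambda f(\bx)-c\lambda(1-\lambda)\|x-\bx\|^2,$$
hence, subtracting $\lambda f(\bx)$ and dividing by $1-\lambda>0$,
$$f(\bx)+\langle y^*,x-\bx\rangle\leq f(x)-c\lambda\|x-\bx\|^2.$$
Letting $\lambda\uparrow 1$ yields
$$f(x)\geq f(\bx)+\langle y^*,x-\bx\rangle+c\|x-\bx\|^2,$$
which is~\eqref{stsub}. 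Thus, for any $y^*\in\partial f(\bx)$, condition~\eqref{charac_strong_subreg} holds at $\bx$ for $y^*$ with neighborhood $U$ and constant $c$, and Theorem~\ref{th_strong_subreg} gives that $\partial f$ is strongly subregular at $\bx$ for $y^*$.
\end{proof}
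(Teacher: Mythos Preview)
Your proof is correct and follows essentially the same route as the paper: combine the subdifferential inequality $f(x_\lambda)\ge f(\bx)+(1-\lambda)\langle y^*,x-\bx\rangle$ with hypothesis~\eqref{strgbx}, divide by $1-\lambda$, and let $\lambda\uparrow 1$. The only (harmless) difference is your preliminary reduction of $U$ to a ball, which is actually unnecessary since~\eqref{strgbx} is assumed for all $x\in U$ and $\lambda\in(0,1)$ without any requirement that $x_\lambda\in U$.
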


\begin{proof}
Let $y^* \in \partial f(\bx),  \lambda \in (0,1)$ and $x \in U$. From~\eqref{strgbx} we get
$$
  (1-\lambda) f(x)  \geq f((1-\lambda)x+\lambda\bx)-\lambda f(\bx)+c\lambda(1-\lambda)\|x-\bx\|^2,$$
i.e., \begin{equation}\label{rel0}
  f(x)  \geq \frac{1}{1-\lambda} f((1-\lambda)x+\lambda\bx)- \frac{\lambda}{1-\lambda}f(\bx) +
  c\lambda\|x-\bx\|^2.\end{equation}
Moreover, since $y^* \in \partial f(\bx)$, one has $f((1-\lambda)x+\lambda\bx)\geq f(\bx)+(1-\lambda)\langle y^*,x-\bx\rangle$ and relation~\eqref{rel0} yields
$$
f(x) \geq f(\bx) + \langle y^*,x-\bx\rangle +c\lambda\|x-\bx\|^2.
$$
Making $\lambda \uparrow 1$ in the latter inequality, one obtains~\eqref{stsub}. %, which completes the proof.
\end{proof}

\begin{rem}
Observe that condition~\eqref{strgbx} is weaker than assuming \emph{strong convexity} of $f$ on $U$, which entails the existence of a constant $c>0$ such that
\begin{equation}\label{sconvdef}
f((1-\lambda)x_1+\lambda x_2)\leq (1-\lambda)f(x_1)+\lambda f(x_2) -c\lambda(1-\lambda)\|x_1-x_2\|^2,
\end{equation}
for all $x_1, x_2 \in U$ and $\lambda \in (0,1)$.
\end{rem}

\section{Consequences}

In this final section we will show some direct consequences of the characterizations of the metric subregularity and the strong subregularity of the subdifferential given in the previous sections.

\subsection{The proximal point algorithm}
The proximal point algorithm was developed by Rockafellar in~\cite{R76} for finding zeroes of maximally monotone operators. Rockafellar proved, in particular, that
the iterative process
\begin{equation}\label{PPM_old}
0\in \lambda_n(x_{n+1}-x_n)+T(x_{n+1})\; \text{ for } n=0,1,2,\ldots
\end{equation}
known as the \emph{exact} proximal point method (where $\lambda_n$
is a sequence of positive numbers and $x_0 \in X$ is the initial
point), provides a sequence $x_n$ which is weakly convergent to a
solution to the inclusion $0\in T(x)$ when $T$ is a maximally monotone operator.
The particular case when $T$ is the subdifferential of a lower
semicontinuous convex function is of special relevance; here the
subproblem~\eqref{PPM_old} becomes \begin{equation}\label{PPM_convex}
x_{n+1}:=\argmin_z
\left\{f(z)+\frac{\lambda_n}{2}\|z-x_n\|^2\right\}, \end{equation}
transforming thus the single problem of minimizing a convex
function into solving a sequence of problems where the objective
function is strongly convex, which improves the convergence
properties of some minimization algorithms (needed in order to solve~\eqref{PPM_convex}). In addition,
the term $\|z-x_n\|^2$ forces the next iteration to remain
\emph{proximal} to the previous one, while the parameter
$\lambda_n$ provides control on this effect.

In~\cite{ADG07} the authors propose a
generalization of the proximal point method without assuming monotonicity of the operator in their convergence results. This generalization basically
consists in replacing the constants $\lambda_n$ in~\eqref{PPM_old} by some functions
$g_n$ which are Lipschitz continuous on some neighborhood of $0$
with Lipschitz constants $\lambda_n$. This modification of the
method allows the mapping $T$ to act between two different (Banach) spaces
$X$ and $Y$.  More specifically,  choose a sequence of Lipschitz continuous
function $g_n : X \to Y$ and consider the following algorithm:
\begin{equation}\label{PPM}
0 \in g_n(x_{n+1}-x_n) +T(x_{n+1})\; \text{ for } n=0,1,2,\ldots.
\end{equation}
In particular, if $T$ is strongly subregular around a solution $\bx$ for $0$ with constant $\kappa>0$ and the Lipschitz constants $\lambda_n$ are  upper bounded by $1/(2\kappa)$, then any sequence satisfying~\eqref{PPM} and whose elements are sufficiently close
to $\bx$, is linearly convergent to this solution
(see~\cite[Theorem~4.2]{ADG07}). Furthermore, the convergence is
superlinear when $\lambda_n$ converges to $0$. When $T$ happens to
be strongly regular, the sequence exists and is unique (within a
neighborhood of the solution). Therefore, when $T=\partial f$ for $f\in\Gamma(X)$, the algorithm~\eqref{PPM} is (super)linearly convergent if the quadratic growth condition~\eqref{charac_strong_subreg} is satisfied. One can find a
similar condition to~\eqref{charac_strong_subreg} for the convergence of the classical algorithm~\eqref{PPM_convex} in~\cite[Th.~3.1]{HZ08}.

Another interesting approach can be found in~\cite{LEV09}, where
the author assumes both maximal monotonicity and metric
subregularity of the mapping
$T$ around some solution $\bx$, and proves the local (super)linear
convergence of the algorithm. Again, for the particular case of
minimizing a lower semicontinuous convex function $f$, since the subdifferential $\partial f$ is a maximal
monotone mapping (see~\cite[Theorem~A]{R70}), the linear
convergence of the algorithm is then guaranteed under metric
subregularity of the subdifferential.
Thus, the exact proximal point algorithm~\eqref{PPM_convex} is
linearly convergent when~\eqref{charac_subreg} is satisfied, and the
convergence is superlinear if $\lambda_n$ converges to $0$.

\subsection{Calmness and solution maps to parametric generalized equations}

We begin by recalling the definitions of two properties closely tied to metric subregularity: \emph{calmness} and \emph{isolated calmness}.

\begin{definition}
A set-valued mapping $F:X\tto Y$ is said to be \emph{calm}
at $\bx$ for $\by$ if  $\by\in F(\bx)$ and there is a positive constant $\kappa$ along with neighborhood $U$ of $\bx$ and $V$
of $\by$ such that
\begin{equation} \label{calm}
e(F(x)\cap V,F(\bx))\leq\kappa \|x-\bx\|, \text{ for all } x \in U.
\end{equation}
\end{definition}

\begin{definition}
A set-valued mapping $F:X\tto Y$ is said to have the \emph{isolated calmness} property
at $\bx$ for $\by$ if $F$ is calm at $\bx$ for $\by$ and, in addition, $\by$ is an isolated point of $F(\bx)$.
\end{definition}
Equivalently, $F$ has the isolated calmness property with constant $\kappa>0$ if there exist some neighborhoods $U$ of $\bx$ and $V$ of $\by$ such that
$$\|y-\by\|\leq\kappa \|x-\bx\|, \text{ for all } x \in U\text{ and }y\in F(x)\cap V.$$
It is well-known that $F$ is calm with constant $\kappa$ at some point $\bx$ for $\by$ if and only if the inverse mapping $F^{-1}$ is metrically subregular at $\by$ for $\bx$ with the same constant (see, e.g.,~\cite[Th.~3H.3]{DRO09}). Similarly, $F$ has the isolated calmness at some $\bx$ for $\by$ if and only if $F^{-1}$ is strongly metrically subregular $\by$ for $\bx$, see e.g.~\cite[Th.~3I.2]{DRO09}.

For any function $f:X\to\R\cup\{\infty\}$, the {\em Fenchel conjugate} (also called the {\em Legendre-Fenchel
conjugate} or {\em transform}) of $f$ is the function \label{note:Fenchel-conjugate}
$f^*:X^* \to [-\infty,+\infty]$ defined by
$$
f^*(x^*) := \sup_{x \in X} \{\langle x^*,x \rangle - f(x) \}.
$$
The Fenchel conjugate is always a convex and lower semicontinuous function. Moreover, if $X$ is reflexive and $f$ is proper, convex and lower semicontinuous, then $(\partial f)^{-1}=\partial f^*: X^*\tto X$. Hence we obtain the next result as a direct consequence of Theorem~\ref{th_subreg} and Theorem~\ref{th_strong_subreg}.

\begin{corollary}\label{cor:calm}
 Given a reflexive Banach space $X$, consider a function $f\in\Gamma(X)$
  and points $\bx\in X$ and $\by^*\in X^*$ such that $\by^*\in\partial f(\bx)$.
  Then the following assertions hold.
\begin{enumerate}
  \item $\partial f$ is calm at $\bx$ for
  $\by^*$ if and only if there exist a neighborhood $V$ of $\by$ and a
  positive constant $c$ such that
  \begin{equation}\label{charac_calm}
    f^*(y^*)\geq f^*(\by^*)+\langle \bx,y^*-\by^*\rangle +cd^2(y,\partial f(\bx)) \text{ whenever } y^*\in V.
  \end{equation}
  Specifically, if $\partial f$ is calm at $\bx$ for $\by^*$ with constant $\kappa$, then~\eqref{charac_calm} holds for all  $c<1/(4\kappa)$; conversely,
  if~\eqref{charac_calm} holds with constant $c$, then $\partial f$ is calm at $\bx$ for $\by^*$ with constant $1/c$.
  \item $\partial f$ has the isolated calmness property at $\bx$ for
    $\by^*$ if and only if there exist a neighborhood $V$ of $\by^*$ and a
    positive constant $c$ such that
    \begin{equation}\label{charac_isocalm}
          f^*(y^*)\geq f^*(\by^*)+\langle \bx,y^*-\by^*\rangle +c\|y^*-\by^*\|^2 \text{ whenever } y^*\in V.
    \end{equation}
    Specifically, if $\partial f$ has the isolated calmness property at $\bx$ for $\by^*$ with constant $\kappa$, then~\eqref{charac_isocalm} holds for all  $c<1/(4\kappa)$; conversely,
      if~\eqref{charac_isocalm} holds with constant $c$, then $\partial f$ has the isolated calmness property at $\bx$ for $\by^*$ with constant $1/c$.
\end{enumerate}
\end{corollary}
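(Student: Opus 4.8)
The plan is to reduce everything to the two characterization theorems already proved—Theorem~\ref{th_subreg} and Theorem~\ref{th_strong_subreg}—by applying them to the Fenchel conjugate $f^*$ in place of $f$, and exploiting the duality identities $(\partial f)^{-1}=\partial f^*$ and the calmness$\leftrightarrow$subregularity equivalences recalled just before the statement. First I would observe that since $X$ is reflexive and $f\in\Gamma(X)$, the conjugate $f^*$ also belongs to $\Gamma(X^*)$ (it is proper because $f$ is proper and lsc convex, hence $f^{**}=f$), and the standard Fenchel duality relation gives $(\partial f)^{-1}=\partial f^*$ as mappings from $X^*$ to $X$; in particular $\by^*\in\partial f(\bx)$ is equivalent to $\bx\in\partial f^*(\by^*)$, so $(\bx,\by^*)\in\gph\partial f$ corresponds to $(\by^*,\bx)\in\gph\partial f^*$ and the hypotheses of Theorems~\ref{th_subreg} and~\ref{th_strong_subreg} are met with $f^*,\by^*,\bx$ in the roles of $f,\bx,\by^*$. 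Note that reflexivity is used again to identify $X^{**}$ with $X$ so that $\partial f^*$ indeed maps into $X$ and the pairing $\langle \bx,y^*-\by^*\rangle$ makes sense.

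For part (i): by the equivalence $F$ calm at $\bx$ for $\by$ $\iff$ $F^{-1}$ metrically subregular at $\by$ for $\bx$ (with the same constant), $\partial f$ is calm at $\bx$ for $\by^*$ if and only if $(\partial f)^{-1}=\partial f^*$ is metrically subregular at $\by^*$ for $\bx$. Apply Theorem~\ref{th_subreg} to $f^*$ at the point $\by^*$ for $\bx$: this is equivalent to the existence of a neighborhood $V$ of $\by^*$ and $c>0$ with
$$f^*(y^*)\geq f^*(\by^*)+\langle \bx,y^*-\by^*\rangle+c\,d^2\big(y^*,(\partial f^*)^{-1}(\bx)\big)\quad\text{whenever }y^*\in V.$$
Since $(\partial f^*)^{-1}=\partial f$ (again by reflexivity and Fenchel duality, applied now to $f^*$ with $f^{**}=f$), we have $(\partial f^*)^{-1}(\bx)=\partial f(\bx)$, and the displayed inequality becomes exactly~\eqref{charac_calm}. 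The quantitative statement is inherited verbatim from Theorem~\ref{th_subreg}: metric subregularity of $\partial f^*$ with constant $\kappa$ yields~\eqref{charac_calm} for all $c<1/(4\kappa)$, and~\eqref{charac_calm} with constant $c$ yields metric subregularity of $\partial f^*$, hence calmness of $\partial f$, with constant $1/c$. Part (ii) is identical with ``metrically subregular'' replaced by ``strongly metrically subregular'', ``calm'' by ``isolated calmness'', Theorem~\ref{th_subreg} by Theorem~\ref{th_strong_subreg}, and $d^2(y^*,\partial f(\bx))$ by $\|y^*-\by^*\|^2$; here one uses the equivalence $F$ has isolated calmness at $\bx$ for $\by$ $\iff$ $F^{-1}$ is strongly metrically subregular at $\by$ for $\bx$.

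I do not expect a genuine obstacle: the argument is a clean translation through Fenchel duality. The only points requiring a little care are (a) verifying that $f^*$ is \emph{proper} (so that it lies in $\Gamma(X^*)$ and Theorems~\ref{th_subreg}–\ref{th_strong_subreg} apply), which follows from $f$ being proper lsc convex via $f^{**}=f$; (b) using reflexivity in both directions—once to get $\partial f^*$ valued in $X$ and once to get $(\partial f^*)^{-1}=\partial f$; and (c) making sure the neighborhoods match up, i.e.\ the neighborhood ``$U$ of $\bar x$'' produced by the cited theorems when applied to $f^*$ is a neighborhood of $\by^*$ in $X^*$, which it is by construction. These are routine, so the proof is essentially the one-line invocation ``Apply Theorem~\ref{th_subreg} (resp.\ Theorem~\ref{th_strong_subreg}) to $f^*$ together with the calmness/subregularity equivalence and $(\partial f)^{-1}=\partial f^*$.''
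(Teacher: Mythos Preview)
Your proposal is correct and follows exactly the paper's own route: the corollary is obtained as a direct consequence of Theorems~\ref{th_subreg} and~\ref{th_strong_subreg} applied to $f^*$, using $(\partial f)^{-1}=\partial f^*$ (via reflexivity and $f\in\Gamma(X)$) together with the calmness/metric-subregularity and isolated-calmness/strong-subregularity equivalences. Your additional care in checking that $f^*\in\Gamma(X^*)$ and that $(\partial f^*)^{-1}=\partial f$ is appropriate and does not deviate from the intended argument.
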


Our final statement provides quadratic growth characterizations of the metric subregularity and calmness properties of solution maps to parametric generalized equations.

\begin{corollary}\label{cor:2} %{\bf (Quadratic growth characterizations of metric regularity and calmness of solution maps to generalized equations.)}
Let
\begin{eqnarray}\label{sm-sub}
S(x):=\big\{y\in Y\big|\;0\in f(x,y)+\partial\phi(y)\big\},\quad x\in X,
\end{eqnarray}
define the solution map of the parametric generalized equation with the reflexive Banach space $Y$ of decision variables and the Banach space $X$ of parameters, and let $\phi\in\Gamma(Y)$. Given $(\bx,\by)\in\gph S$, assume that $f\colon X\times Y\to Y$ is Lipschitz continuous around $(\bx,\by)$ and partially strictly differentiable at this point with respect to $x$ uniformly in $y$ and that its partial derivative operator $\nabla_x f(\bx,\by)\colon X\to Y$ is surjective. The following hold:
\begin{enumerate}
\item\label{(i)} The solution map $S$ in \eqref{sm-sub} is metrically subregular at $\bx$ for $\by$ if and only if the conjugate growth condition~\eqref{charac_calm} is satisfied.

\item\label{(iv)} The solution map $S$ is strongly subregular at $\bx$ for $\by$ if and only if the conjugate growth condition~\eqref{charac_isocalm} is satisfied.

\item\label{(ii)} Suppose that the base mapping $f=f(x)$ in \eqref{sm-sub} does not depend on the decision variable $y$. Then the solution map $S$ is calm at $\bx$ for $\by$ if and only if the growth condition \eqref{charac_subreg} is satisfied.

\item\label{(iii)} If the quadratic growth condition~\eqref{charac_strong_subreg} holds with constant $c>0$ and the partial Lipschitz modulus of $f$ with respect to $y$ is smaller than $c$, then the solution map $S$ has the isolated calmness property at $\bx$ for $\by$.

\end{enumerate}
\end{corollary}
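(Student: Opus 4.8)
The plan is to peel off the parametric and smooth data carried by $f$, reducing all four assertions to regularity properties of $\partial\phi$ at a single reference point, and then to feed those into the characterizations already obtained in this paper. Set $\bar v^{*}:=-f(\bx,\by)$. Since $(\bx,\by)\in\gph S$ we have $\bar v^{*}\in\partial\phi(\by)$, and, $Y$ being reflexive, this is the same as $\by\in(\partial\phi)^{-1}(\bar v^{*})=\partial\phi^{*}(\bar v^{*})$. The defining relation $y\in S(x)\iff y\in(\partial\phi)^{-1}\big(-f(x,y)\big)$ exhibits $S$ as the solution map of a generalized equation whose outer multifunction is $(\partial\phi)^{-1}$ and whose inner single-valued part is $f$. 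The three hypotheses on $f$ — Lipschitz continuity around $(\bx,\by)$, partial strict differentiability in $x$ uniformly in $y$, and surjectivity of $\nabla_{x}f(\bx,\by)$ — are exactly the \emph{ample parametrization} conditions under which the implicit-function-type reduction of Mordukhovich and Nghia~\cite{MN12} (alternatively, the implicit-function apparatus of~\cite{DRO09}) applies; I would invoke it to obtain: $S$ is metrically subregular at $\bx$ for $\by$ iff $(\partial\phi)^{-1}$ is metrically subregular at $\bar v^{*}$ for $\by$; $S$ is strongly subregular at $\bx$ for $\by$ iff $(\partial\phi)^{-1}$ is strongly subregular at $\bar v^{*}$ for $\by$; and, when $f=f(x)$ does not depend on $y$ (so that $S(x)=(\partial\phi)^{-1}(-f(x))$ depends on the parameter only through the metrically regular map $x\mapsto -f(x)$), $S$ is calm at $\bx$ for $\by$ iff $(\partial\phi)^{-1}$ is calm at $\bar v^{*}$ for $\by$.

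With this reduction in hand, parts~\ref{(i)}, \ref{(iv)} and~\ref{(ii)} follow by translating through the Fenchel conjugate (throughout, \eqref{charac_calm}--\eqref{charac_strong_subreg} are read with $\phi$, $\by$ and $\bar v^{*}$ in place of $f$, $\bx$ and $\by^{*}$, so that $f^{*}$ becomes $\phi^{*}$). For~\ref{(i)}, metric subregularity of $(\partial\phi)^{-1}$ at $\bar v^{*}$ for $\by$ is the same as calmness of $\partial\phi$ at $\by$ for $\bar v^{*}$ (inverse duality between calmness and metric subregularity, see~\cite[Th.~3H.3]{DRO09}), and by the first item of Corollary~\ref{cor:calm} applied to $\phi$ this is equivalent to the conjugate growth condition~\eqref{charac_calm}. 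For~\ref{(iv)}, the same route through the second item of Corollary~\ref{cor:calm} (via the inverse duality between isolated calmness and strong metric subregularity) gives~\eqref{charac_isocalm}. For~\ref{(ii)}, calmness of $(\partial\phi)^{-1}=\partial\phi^{*}$ at $\bar v^{*}$ for $\by$ is equivalent, by the first item of Corollary~\ref{cor:calm} applied now to $\phi^{*}\in\Gamma(Y^{*})$ (using that $Y^{*}$ is reflexive, $(\phi^{*})^{*}=\phi$, and $\partial\phi^{*}(\bar v^{*})=(\partial\phi)^{-1}(\bar v^{*})$), to the growth condition~\eqref{charac_subreg} for $\phi$; equivalently one may apply Theorem~\ref{th_subreg} to $\phi$ after the inverse-duality step.

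Part~\ref{(iii)} needs neither reflexivity nor surjectivity and is settled by a short direct estimate. Assuming~\eqref{charac_strong_subreg} holds for $\phi$ at $\by$ for $\bar v^{*}$ with constant $c$, Theorem~\ref{th_strong_subreg} yields strong subregularity of $\partial\phi$ at $\by$ for $\bar v^{*}$ with constant $1/c$, that is, $\|y-\by\|\le\frac1c\,d(\bar v^{*},\partial\phi(y))$ for $y$ in some ball $\ball_{r}(\by)$. Let $\ell<c$ bound the partial Lipschitz modulus of $f$ in $y$ and let $L$ be a Lipschitz constant of $f$ near $(\bx,\by)$. For $x$ close to $\bx$ and $y\in S(x)\cap\ball_{r}(\by)$ we have $-f(x,y)\in\partial\phi(y)$, so, since $\bar v^{*}=-f(\bx,\by)$, splitting $f(x,y)-f(\bx,\by)$ through $f(x,\by)$ and using the two Lipschitz bounds gives
\[
d(\bar v^{*},\partial\phi(y))\le\|f(x,y)-f(\bx,\by)\|\le\ell\|y-\by\|+L\|x-\bx\|.
\]
Combining the two displays yields $(1-\ell/c)\|y-\by\|\le(L/c)\|x-\bx\|$, hence $\|y-\by\|\le\frac{L}{c-\ell}\|x-\bx\|$ for all such $x$ and $y$; putting $x=\bx$ shows in addition that $\by$ is isolated in $S(\bx)$, so $S$ has the isolated calmness property at $\bx$ for $\by$.

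The main obstacle is the reduction step underlying parts~\ref{(i)}, \ref{(iv)} and~\ref{(ii)}: establishing that the regularity of the implicitly defined solution map $S$ at $(\bx,\by)$ is governed solely by that of $(\partial\phi)^{-1}$ at $(\bar v^{*},\by)$. In a self-contained account this is a parametric implicit-function theorem for metric (and strong) subregularity and for calmness, whose crux is to exploit the surjectivity of $\nabla_{x}f(\bx,\by)$ together with partial strict differentiability to move estimates between the parameter space $X$ and the range of $\partial\phi$; once that machinery is quoted from~\cite{MN12,DRO09}, only the routine conjugate bookkeeping above remains, while part~\ref{(iii)} is entirely elementary.
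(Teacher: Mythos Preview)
Your overall strategy is the paper's: strip off the parametric data via an external implicit-function/ample-parametrization result to reduce the regularity of $S$ to that of $\partial\phi$ (or $(\partial\phi)^{-1}$) at $(\by,\bar v^{*})$, and then invoke Corollary~\ref{cor:calm} or Theorems~\ref{th_subreg}--\ref{th_strong_subreg}. The paper, however, cites \cite[Cor.~3.5]{AM10} and \cite[Th.~5.10]{AM11} for parts~\ref{(i)} and~\ref{(iv)}, \cite[Th.~5.6]{AM10} for part~\ref{(ii)}, and \cite[Th.~4.3]{AM11} for part~\ref{(iii)}; these Arag\'on Artacho--Mordukhovich papers are precisely where the transfer of (strong) metric subregularity and calmness between $S$ and the field mapping under the stated hypotheses on $f$ is established. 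Your cited sources do not cover this: \cite{MN12} concerns tilt stability and second-order characterizations, not subregularity of solution maps, and the implicit-function machinery in \cite{DRO09} is developed chiefly for metric regularity and the Aubin property --- robust properties that survive Lipschitz perturbation, unlike metric subregularity and calmness. So the reduction you rightly flag as the crux cannot be quoted from \cite{MN12,DRO09}; point instead to \cite{AM10,AM11}.

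Your direct argument for part~\ref{(iii)} is correct and is a clean, self-contained proof of the relevant special case of \cite[Th.~4.3]{AM11}; it makes explicit why the bound $\ell<c$ on the partial Lipschitz modulus in $y$ is needed (to absorb the $\ell\|y-\by\|$ term) and delivers the explicit isolated-calmness constant $L/(c-\ell)$, which the paper's one-line citation leaves hidden. It also shows, as you note, that neither reflexivity of $Y$ nor surjectivity of $\nabla_x f(\bx,\by)$ is used in this part.
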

\begin{proof} Assertions~\ref{(i)} and~\ref{(iv)} follow from Corollary~\ref{cor:calm},~\cite[Cor.3.5]{AM10} and~\cite[Th.~5.10]{AM11}. Assertion~\ref{(ii)} is a consequence of Theorem~\ref{th_subreg} and  \cite[Th.~5.6]{AM10}. Assertion~\ref{(iii)} follows from Theorem~\ref{th_strong_subreg} and~\cite[Th.~4.3]{AM11}.
\end{proof}

\begin{rem}
The converse of Corolary~\ref{cor:2}\ref{(iii)} also holds under some additional conditions, see~\cite[Th.~5.5]{AM11}.
\end{rem}

\end{document}